\theoremstyle{plain}
\newtheorem{thm}{Theorem}[section]
\newtheorem{cor}[thm]{Corollary}
\newtheorem{lma}[thm]{Lemma}
\newtheorem{prp}[thm]{Proposition}
\newtheorem{fact}[thm]{Fact}
\theoremstyle{definition}
\newtheorem{defi}[thm]{Definition}
\newtheorem{rmk}[thm]{Remark}
\newtheorem{exa}[thm]{Example}
\newtheorem{quest}[thm]{Question}
\theoremstyle{remark}
\numberwithin{equation}{section}
\newcommand{\reduct}{\upharpoonright}
\newcommand{\mcA}{\mathcal{A}}
\newcommand{\mcB}{\mathcal{B}}
\newcommand{\mcC}{\mathcal{C}}
\newcommand{\mcD}{\mathcal{D}}
\newcommand{\mcH}{\mathcal{H}}
\newcommand{\mcM}{\mathcal{M}}
\newcommand{\mcN}{\mathcal{N}}
\newcommand{\mbK}{\mathbf{K}}
\newcommand{\mbbN}{\mathbb{N}}
\newcommand{\mbbQ}{\mathbb{Q}}
\newcommand{\FR}{Fra\"iss\'e }
\newcommand{\Fr}{Fra\"iss\'e}
\title{Homogenizable structures and model completeness}
\author{Ove Ahlman}
\thanks{E-mail: ove@math.uu.se}
\address{Ove Ahlman, Department of Mathematics, Uppsala University, Box 480, 75 106 Uppsala, Sweden}
\email{ove@math.uu.se}
\keywords{Homogenizable, Model-complete, Amalgamation class, Quantifier-elimination}
\subjclass[2010]{03C10  (primary), 03C50, 03C52 (secondary)}
\begin{document}
\maketitle
\begin{abstract}
A homogenizable structure $\mcM$ is a structure where we may add a finite number of new relational symbols to represent some $\emptyset-$defi\-nable relations in order to make the structure homogeneous. In this article we will divide the homogenizable structures into different classes which categorize many known examples and show what makes each class important. We will show that model completeness is vital for the relation between a structure and the amalgamation bases of its age and give a necessary and sufficient condition for an $\omega-$categorical model-complete structure to be homogenizable.
\end{abstract}
\section{Introduction}
A structure $\mcM$ is called \textbf{homogeneous} (sometimes called ultrahomogeneous \cite{H}) if for each $\mcA\subseteq \mcM$ and embedding $f:\mcA\rightarrow \mcM$, $f$ may be extended into an automorphism of $\mcM$ i.e. there is an isomorphism $g:\mcM\rightarrow\mcM$ such that $g\reduct A = f$. A structure over a finite relational language is homogenizable if we can add new relational symbols to the structure's signature representing a finite number of formulas, such that the new expanded structure is homogeneous (see Definition \ref{homidef} for details). The homogenizable structures are found in a variety of areas of mathematics, especially when studying random structures or structures with some excluded subgraphs, also called $\mcH-$free structures \cite{AK2, AK, BBPP, C2, KPR, M}. In 1953 \FR \cite{Fr} studied homogeneous structures and found that for each set of finite structures $\mbK$ satisfying the properties HP, JEP and AP there is a unique infinite countable homogeneous structure $\mcM$ such that $\mbK$ is exactly the set of finite substructures of $\mcM$ (up to isomorphism). Covington \cite{C} extended \Fr 's result to sets $\mbK$ which instead of AP satisfy the so called ``local failure of amalgamation'' property, and concluded that each of these sets induces a unique homogenizable structure which is model-complete. This study of the homogenizable structures gives a sufficient yet not necessary condition for a set of structures to generate a homogenizable structure. In a more recent study Hartman, Hubi$\check{\text{c}}$ka and Ne$\check{\text{s}}$et$\check{\text{r}}$il \cite{HHN} explores the concept of homogenizable structures by investigating how high an arity is needed among the newly added relational symbols and call this number the relational complexity. The article shows that if $\mbK$ is a set of structures which are restricted by a finite minimal family of finite connected relational structures then $\mbK$ generates a homogenizable structure. This is a sufficient, but not necessary condition for a set of finite structures to induce an infinite homogenizable structure.
In even more generality relational complexity has been studied by Cherlin \cite{Ch} among others, who focus on properties of the automorphism group. The concept of relational complexity and the results in the current article are easy to merge, as we work closely to the homogenizing formulas. However, the question whether all the structures studied by \cite{Ch,C,HHN} are boundedly homogenizable (see Definition \ref{symdef}) or not remains open. \\\indent
In this paper we use a finite relational vocabulary and study countably infinite homogenizable structures, what the formulas which homogenize them look like, how their set of finite substructures behave and how the types of the structure affect the homogenization. In section \ref{prelim} we introduce the subject and give some basic definitions, but we will also provide many instructive examples pointing out how different kinds of homogenizable structures relate to each other. The main result is the following theorem which gives a necessary and sufficient condition for $\omega-$categorical model-complete structures to be homogenizable (see Definition \ref{SEAPdef} for the meaning of SEAP). 
\begin{thm}\label{mcmainthm}Let $\mcM$ be a countably infinite structure which is model-complete and $\omega-$categorical. $Age(\mcM)$ satisfies SEAP if and only if $\mcM$ is homogenizable.
\end{thm}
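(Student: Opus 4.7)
The plan is to prove the two implications separately, handling the forward implication as a routine translation and reserving the main work for the reverse.

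For the forward direction, assume $\mcM$ is homogenizable and let $\mcM^{+}$ denote a homogeneous expansion by finitely many $\es$-definable relations $R_1, \ldots, R_k$. By classical \FR theory, $Age(\mcM^{+})$ satisfies AP in the expanded language, and each $\mcA \in Age(\mcM)$ carries a canonical enrichment $\mcA^{+}$ obtained by evaluating the homogenizing formulas along any embedding into $\mcM$ (with model-completeness of $\mcM$ ensuring that this enrichment is intrinsic to $\mcA$). The idea is to translate a given amalgamation diagram in $Age(\mcM)$ into one of enrichments in $Age(\mcM^{+})$, amalgamate there, and take reducts. The extra flexibility in SEAP to first enlarge the common part corresponds exactly to making enough room to pin down the values of the new predicates on the whole amalgamation base; model-completeness is what lets us do this pinning down via existential witnesses inside $\mcM$.

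For the reverse direction, assume $Age(\mcM)$ satisfies SEAP. By $\omega$-categoricity each type space $S_n(\es)$ is finite and every $p \in S_n(\es)$ is isolated by some formula $\varphi_p(\bar x)$. The key external input is to extract from SEAP a uniform finite arity $n_0$ such that for every finite $\mcA \subseteq \mcM$ there is an extension $\mcA' \supseteq \mcA$ of bounded size in $Age(\mcM)$ which serves as an amalgamation base. We then let $\mcM^{+}$ be the expansion obtained by adding, for each $k \le n_0$ and each $p \in S_k(\es)$, a predicate interpreting $\varphi_p(\mcM)$. A back-and-forth then verifies homogeneity of $\mcM^{+}$: given a partial isomorphism $f \colon \mcA^{+} \to \mcB^{+}$ between finite induced substructures of $\mcM^{+}$ and a new element $a \in \mcM$, pass to an SEAP-amalgamation base $\mcA' \supseteq \mcA \cup \{a\}$, amalgamate $\mcA'$ with $\mcB$ over $\mcA$ via SEAP, and then apply model-completeness together with $\omega$-categoricity to realise the resulting amalgam inside $\mcM$ as an extension of $f$.

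The main obstacle is the bounded-arity step underlined above: extracting a single $n_0$ past which no new $\es$-definable predicates contribute anything. $\omega$-categoricity alone is insufficient, since there exist $\omega$-categorical structures requiring homogenizing predicates of unbounded arity. The whole point of pairing SEAP with model-completeness is to collapse this phenomenon: model-completeness forces embeddings between models of $Th(\mcM)$ to be elementary, so that the amalgamation data SEAP tracks are automatically type-complete, and the uniform arity bound must then be read off directly from the formulation of SEAP. Verifying this carefully, and checking that the back-and-forth step genuinely produces an $\mcM^{+}$-automorphism rather than merely an elementary map, is where the bulk of the argument lies.
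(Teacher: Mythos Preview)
Your proposal rests on a misreading of SEAP. In Definition~\ref{SEAPdef}, SEAP$_{k,m}$ does \emph{not} say that one may enlarge the base $\mcA$ to obtain an amalgamation; rather, it says that whenever $f:\mcA\to\mcB$ and $g:\mcA\to\mcC$ \emph{fail} to amalgamate, one can find a \emph{substructure} $\mcA_0\subseteq\mcA$ with $|A_0|<k$ and \emph{superstructures} $\mcB_0\supseteq\mcB$, $\mcC_0\supseteq\mcC$ (each adding fewer than $m$ points) so that the restricted maps $f\reduct A_0$, $g\reduct A_0$ into $\mcB_0,\mcC_0$ already fail to amalgamate. So SEAP localises \emph{failures} of amalgamation to a small piece of the base, after allowing a bounded enlargement of the two sides; it says nothing about enlarging $\mcA$ or about producing amalgams.

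This undermines both directions of your sketch. For the forward direction (homogenizable $\Rightarrow$ SEAP), the task is not to amalgamate in $Age(\mcM^{+})$ and take reducts: you must start from a \emph{failed} diagram in $Age(\mcM)$ and exhibit a small $\mcA_0$ and boundedly larger $\mcB_0,\mcC_0$ still failing. The paper does this by observing that if $f,g$ have no amalgam then $tp^{\mcM}(f(\bar a))\neq tp^{\mcM}(g(\bar a))$ while the atomic diagrams agree; model-completeness makes the homogenizing formulas $\Sigma_1$, so some homogenizing formula $\exists\bar y\,\varphi$ separates them on a subtuple $\bar a_0$ of size at most the maximum arity of a homogenizing formula, and the existential witnesses supply the needed enlargements of $\mcB,\mcC$.

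For the reverse direction (SEAP $\Rightarrow$ homogenizable), your key step --- ``extract from SEAP a uniform $n_0$ such that every finite $\mcA$ extends within bounded size to an amalgamation base'' --- is not what SEAP provides, and via Lemma~\ref{amtofree} it would amount to showing that $\mcM$ is \emph{boundedly} homogenizable, which the paper leaves open in general for model-complete homogenizable structures. The actual argument is direct: take $k$ from SEAP$_{k,m}$, add a relation symbol for each $i$-type over $\es$ with $i<k$, and show the expansion $\overline{\mcN}$ is homogeneous by contradiction. If $\bar a_1,\bar a_2$ had the same $\overline{\mcN}$-atomic diagram but different $\mcN$-types, witnesses for their $\Sigma_1$ isolating formulas give $\mcB,\mcC$ with no amalgam over $\mcA=\mcN\reduct\bar a_1$; SEAP then yields $\mcA_0$ of size $<k$ on which the failure persists, but subtuples of size $<k$ have equal types by construction of $\overline{\mcN}$, a contradiction. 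No back-and-forth or ``bounded amalgamation base'' step is needed.
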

\indent Section \ref{symsec} studies the \textit{boundedly homogenizable} structures. We prove that they are model-complete and hence conclude with the following theorem, which is an interesting extension of \Fr 's theorem.
\begin{thm}\label{symmainthm}
Let $\mbK$ be a set of structures closed under isomorphism and satisfying HP and AP. Then there is a unique countably infinite structure $\mcM$ such that $Age(\mcM) =\mbK$ and $\mcM$ is boundedly homogenizable.
\end{thm}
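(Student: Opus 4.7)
The plan is to adapt Fraïssé's construction by partitioning $\mbK$ into its JEP-classes, applying the classical theorem to each, and assembling the resulting limits into a single structure.

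To partition, I would use that $L$ is a finite relational vocabulary: there are only finitely many isomorphism types of one-element $L$-structures, hence by HP only finitely many one-element members of $\mbK$. If $\mbA, \mbB \in \mbK$ both realize (a copy of) a common one-element substructure, then AP applied over that substructure shows $\mbA$ and $\mbB$ lie in a common JEP-class; conversely, distinct JEP-classes must realize disjoint families of one-element types. Hence $\mbK = \mbK_1 \sqcup \cdots \sqcup \mbK_k$ for some $k < \omega$, with each $\mbK_i$ closed under isomorphism. Each $\mbK_i$ inherits HP and AP, and has JEP by construction, so the classical Fraïssé theorem produces a unique countable homogeneous limit $\mcM_i$ with $Age(\mcM_i) = \mbK_i$.

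I would then assemble $\mcM$ as a single $L$-structure on the disjoint union of the universes of the $\mcM_i$ so that $Age(\mcM) = \mbK$, and show that $\mcM$ is boundedly homogenizable. The natural homogenizing expansion consists of unary predicates $U_1, \ldots, U_k$ marking the universe of each $\mcM_i$. Each $U_i$ should be $\emptyset$-definable in $\mcM$ by a formula identifying the JEP-class of its argument, and in the expansion $\mcM^+$ any finite partial isomorphism is forced by these predicates to respect the partition; homogeneity of each $\mcM_i$ then extends it on each piece to a total automorphism.

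Uniqueness follows by Fraïssé-style back-and-forth in the expanded language: another boundedly homogenizable structure $\mcN$ with $Age(\mcN) = \mbK$ admits an analogous expansion $\mcN^+$ with the same age as $\mcM^+$ in $L^+$, and satisfies HP, JEP, and AP there; hence $\mcN^+ \cong \mcM^+$, giving $\mcN \cong \mcM$ in $L$. The main obstacle I expect is the assembly step: a naive $L$-disjoint union of the $\mcM_i$ has among its finite substructures ``mixed'' ones drawing from several JEP-classes, which are not generally in $\mbK$ (indeed closure of $\mbK$ under disjoint union is equivalent to JEP). Overcoming this requires using the precise definition of bounded homogenizability from Definition \ref{symdef} to reconcile the JEP-class partition with the stipulated age.
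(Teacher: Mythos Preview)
Your partition into JEP-classes is unnecessary here: the paper works over a finite relational vocabulary, and in that setting HP together with AP already yields JEP, since the empty structure is an amalgamation base (this is noted explicitly in Section~\ref{prelim}). So your partition is trivial, $k=1$, and your construction collapses to the Fra\"iss\'e limit itself. Existence is therefore immediate --- but the paper says the same thing in one line.

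The genuine gap is in uniqueness. You propose that a second boundedly homogenizable $\mcN$ with $Age(\mcN)=\mbK$ admits an expansion $\mcN^+$ which is homogeneous and has the same age (in $L^+$) as $\mcM^+$. But with only one JEP-class your expansion adds only a predicate for the whole universe, so $\mcN^+$ is essentially $\mcN$, and you are left needing $\mcN$ itself to be homogeneous --- which is exactly what must be proved, and is not implied by homogenizability alone. More generally, ``boundedly homogenizable'' guarantees \emph{some} homogenizing expansion, but gives no control over which formulas are used or what the expanded age looks like, so there is no reason two such expansions should have matching ages for a back-and-forth. You flag this obstacle yourself in the last sentence but do not resolve it.

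The paper's argument uses bounded homogenizability in a completely different way: Lemma~\ref{symtosigma} shows that any boundedly homogenizable structure is model-complete (by replacing each homogenizing formula with a $\Sigma_1$-formula built from the atomic diagram of a tuple whose type is quantifier-free isolated). Uniqueness then falls out of Saracino's theorem that an $\omega$-categorical theory has at most one model-complete model with a given age. The key idea you are missing is this passage through model-completeness; without it, bounded homogenizability does not obviously constrain $\mcN$ enough to force it to be the Fra\"iss\'e limit.
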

\noindent In other words, the theorem states that the unique homogeneous structure having age equal to $K$, also called the \Fr-limit, is the unique boundedly homogeneous structure.
\\\indent In section \ref{unisec} we study the \textit{uniformly homogenizable} structures, and prove that these are the structures where we may find a universal witness which witnesses all the homogenizing formulas. We will also see that the uniformly homogenizable structures contain many homogenizable structures which are ``easy'' to homogenize. In Section \ref{unavoidsec} we do a quick study of the \textit{unavoidably homogenizable} structures, the set of structures which are as close to being homogeneous as it gets. In all three of the sections \ref{symsec}, \ref{unisec} and \ref{unavoidsec} we prove that the homogenizable structures have certain conditions associated to the amalgamation bases of their ages and that we may extend certain self-embeddings into automorphisms. \\\indent
Unary relation symbols are often considered with special care, and so we devote Section \ref{unsec} to the study of the structures we may homogenize by only adding new unary relational symbols. The epicenter of this is Theorem \ref{unatouni} which connects unary boundedly homogenizable structures with the uniformly homogenizable structures.

\section{Homogenizable structures}\label{prelim}
\noindent We will consider a \textbf{finite relational vocabulary} $V$ which is a finite set of relational symbols of finite arities, so in particular has no constant or function symbols. In this paper we will only consider first order formulas over such a vocabulary. The formulas which are of the form $\exists x_1\ldots\exists x_n\varphi$ where $\varphi$ is quantifier free are called \textbf{$\Sigma_1-$formulas}. We will denote $V-$structures by calligraphic letters $\mcA,\mcB,\mcM,\mcN,\ldots$ and their respective universes with roman letters $A,B,M,N,\ldots$. Ordered tuples $\bar a,\bar b,\bar x,\ldots$ may at times be (notationally) identified with the set of their elements. The meaning will be made obvious from what operations are applied. The set $\{1,\ldots,n\}$ may be written with the abbreviation $[n]$. If $\mcM$ is a structure and $A\subseteq M$, then $\mcM\reduct A$ is the substructure of $\mcM$ with universe $A$. If $V\subseteq V'$ are both vocabularies and $\mcM$ is a $V'-$structure, then the \textbf{reduct} of $\mcM$ to $V$, written $\mcM\reduct V$ is the $V-$structure which we get when we remove all relations in $V'-V$ from $\mcM$. If $f: A\rightarrow B$ is a function and $C\subseteq A$ then $f\reduct C$ is the function $f$ restricted to the domain $C$. Although we use $\reduct$ for many things, the context should always make the intention clear. If $\bar a\in M$ then $tp^\mcM(\bar a/\bar b)$ is the set of all formulas (with parameters from $\bar b$) which $\bar a$ satisfies, also called the complete type of $\bar a$ over $\bar b$. If $\varphi(\bar x)\in tp(\bar a)$ is such that for every formula $\psi\in tp(\bar a)$, $\mcM\models \forall \bar x(\varphi(\bar x)\rightarrow \psi(\bar x))$ we say that $\varphi$ \textbf{isolates} $tp(\bar a)$. A model $\mcM$ is \textbf{model-complete} if $Th(\mcM)$ (the theory of all true sentences in $\mcM$) is such that every embedding between models of $Th(\mcM)$ is elementary. It is a known fact (\cite{H}, Theorem 8.3.1) that $\mcM$ is model-complete if and only if each formula is equivalent to a $\Sigma_1-$formula over $Th(\mcM)$.
\\\indent If $\mcM$ is a relational structure then $Age(\mcM)$ is the class of all finite structures which are embeddable in $\mcM$.
Let $\mbK$ be any set of finite structures. We say that $\mbK$ satisfies the \textbf{hereditary property}, written \textbf{HP}, if for each $\mcA\in\mbK$, if $\mcB\subseteq \mcA$ then $\mcB\in\mbK$. If, for each $\mcB,\mcC\in\mbK$, there exists a structure $\mcD\in\mbK$ in which both $\mcB$ and $\mcC$ are embeddable then $\mbK$ have the \textbf{joint embedding property}, written \textbf{JEP}.
A structure $\mcA$ is an \textbf{amalgamation base} for $\mbK$ (or just an amalgamation base if $\mbK$ is clear from the context), if for any structures $\mcB,\mcC\in \mbK$ and any embeddings $f:\mcA\rightarrow \mcB$, $g: \mcA\rightarrow \mcC$ there is a structure $\mcD\in\mbK$, called an \textbf{amalgam} for $f$ and $g$, and embeddings $f_0:\mcB\rightarrow \mcD, g_0:\mcC\rightarrow \mcD$ such that for each $x\in A$, $f_0(f(x)) = g_0(g(x))$. In the special case when $f_0, g_0$ can be chosen so that $f_0(B) \cap g_0(C) = g_0(A)=f_0(A)$ we call $\mcA$ a \textbf{disjoint amalgamation base}. If each $\mcA\in\mbK$ is an (disjoint) amalgamation base for $\mbK$ then $\mbK$ satisfies the (disjoint) amalgamation property, in short written AP. 
We note that for sets $\mbK$ containing only relational structures and satisfying HP and AP the property JEP follows, since the empty structure is an amalgamation base.
\begin{thm}[\FR \cite{Fr}]\label{FRthm} If $\mbK$ is a class of relational structures closed under isomorphism which satisfies HP and AP, then there is a unique countably infinite homogeneous structure $\mcM$ such that $Age(\mcM)=\mbK$. The structure $\mcM$ is called the \FR limit of $\mbK$.
\end{thm}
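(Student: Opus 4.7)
The plan is to build $\mcM$ as the union of a countable chain of structures in $\mbK$ that enjoys a ``one-point extension'' property, and then derive homogeneity, the age computation, and uniqueness from that property by a standard back-and-forth. Call $\mcM$ \emph{$\mbK$-saturated} if for every finite $\mcA\subseteq\mcM$ and every $\mcB\in\mbK$ containing $\mcA$ as a substructure there is an embedding $g\colon\mcB\to\mcM$ fixing $\mcA$ pointwise. The first small observation I would record is that a countable $\mbK$-saturated structure $\mcM$ with $Age(\mcM)\subseteq\mbK$ automatically satisfies $Age(\mcM)=\mbK$: apply $\mbK$-saturation starting from $\mcA=\es$, which works because the empty structure is an amalgamation base, so JEP holds.

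For existence I would iteratively build $\mcM_0\subseteq\mcM_1\subseteq\cdots$ with each $\mcM_n\in\mbK$. Since HP and the finite relational vocabulary ensure that $\mbK$ has only countably many isomorphism types of finite structures, I can fix an enumeration of all ``tasks'' consisting of pairs $(\mcA,\mcB)$ with $\mcA\subseteq\mcB\in\mbK$. Using a standard pairing argument I would schedule the tasks so that at stage $n$ a specified pair $(\mcA,\mcB)$ with an embedding $\mcA\hookrightarrow\mcM_n$ is processed: apply AP to the two embeddings $\mcA\hookrightarrow\mcM_n$ and $\mcA\hookrightarrow\mcB$ to obtain $\mcM_{n+1}\in\mbK$ into which both embed, and relabel so that $\mcM_n\subseteq\mcM_{n+1}$ and the chosen copy of $\mcB$ extends $\mcA$. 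Taking $\mcM=\bigcup_n\mcM_n$, every finite $\mcA\subseteq\mcM$ lies in some $\mcM_n$, and every task is eventually handled, so $\mcM$ is $\mbK$-saturated. HP together with $\mcM_n\in\mbK$ gives $Age(\mcM)\subseteq\mbK$, and the preliminary observation upgrades this to equality.

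For homogeneity and uniqueness I would run a back-and-forth. Given an embedding $f\colon\mcA\to\mcM$ with $\mcA\subseteq\mcM$ finite, enumerate $M=\{m_0,m_1,\ldots\}$ and build finite partial isomorphisms $f=f_0\subseteq f_1\subseteq\cdots$ of $\mcM$ so that $m_n\in\dom f_{2n+1}$ and $m_n\in\rng f_{2n+2}$. Each extension step adjoins one element $m$ to the current domain (or range), passes to the enlarged finite substructure $\mcB\subseteq\mcM$, and applies $\mbK$-saturation on the opposite side to find a copy of $\mcB$ extending the current partial isomorphism. The union $\bigcup_n f_n$ is then an automorphism of $\mcM$ extending $f$. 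Running the identical back-and-forth between two countable $\mbK$-saturated structures with the same age produces an isomorphism, which gives uniqueness.

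The main obstacle, as in any \Fr-style construction, is the bookkeeping in the chain: one must schedule tasks so that every pair $(\mcA,\mcB)$ with $\mcA$ embedding into some future $\mcM_n$ is eventually handled, using only the countably many tasks that have been created by that stage. Once a suitable diagonal enumeration is fixed, amalgamation does all the substantive work, and the remainder is routine back-and-forth. The uniqueness portion also relies on the preliminary ``JEP from empty amalgamation base'' remark so that both candidate structures are guaranteed to realize every element of $\mbK$ as a substructure.
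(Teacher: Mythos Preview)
The paper does not actually prove this theorem; it is stated as the classical result of Fra\"iss\'e and cited to \cite{Fr} without argument. Your proposal is the standard construction---build a chain in $\mbK$ by amalgamation with suitable bookkeeping, verify the extension property, and use back-and-forth for homogeneity and uniqueness---and it is correct. Your side remark that JEP follows because the empty structure is an amalgamation base is also made explicitly in the paper just before the theorem is stated.
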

\noindent Following the concept of being homogeneous we will in this article study structures which are so close to homogeneous that it is only a matter of adding finitely many symbols to already existing definable relations. Recall from the beginning of this section that we only consider finite vocabularies.
\begin{defi}\label{homidef}A $V-$structure $\mcM$ is \textbf{homogenizable} if there exists a finite amount of formulas $\varphi_1(\bar x_0),\ldots,\varphi_n(\bar x_n)$, called the homogenizing formulas, such that if we, for each $i\in\{1,\ldots,n\}$, create a new relational symbol $R_i$ of the same arity as $\varphi_i$ and put $V' = V\cup\{R_1,\ldots,R_n\}$, then there is a homogeneous $V'-$structure $\mcN$ such that $\mcN\reduct V= \mcM$ and for each $\bar a\in N$ and $i\in\{1,\ldots,n\}$ $\mcN\models R_i(\bar a) \leftrightarrow \varphi_i(\bar a)$. If all homogenizing formulas are $\Sigma_1$, then we say that $\mcM$ is \textbf{$\Sigma_1-$homogenizable}. A homogenizable structure is \textbf{unary homogenizable} if all homogenizing formulas have only one free variable.
\end{defi}
A structure $\mcM$ is called $\omega-$\textbf{categorical} if $Th(\mcM)$ has a single countable model up to isomorphism. The following well known fact about $\omega-$categorical structures will be used without mention throughout this article.
\begin{fact}If $\mcM$ is a structure then the following are equivalent.
\begin{itemize} 
\item $\mcM$ is $\omega-$categorical.
\item For each $n$ there exists only a finite number of $n-$types over $\emptyset$.
\item Each type over $\emptyset$ is isolated.
\end{itemize}
\end{fact}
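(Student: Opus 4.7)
The plan is to prove the three conditions equivalent via the route $(2) \Leftrightarrow (3)$, then $(2) \Rightarrow (1) \Rightarrow (2)$. Throughout I would write $T = Th(\mcM)$ and let $S_n(T)$ denote the Stone space of complete $n$-types over $\es$, equipped with the usual basis of clopen sets $[\varphi] = \{p \in S_n(T) : \varphi \in p\}$. Recall that the isolated points of this compact Hausdorff space are precisely the types isolated by some formula in the sense of the preliminaries.

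The equivalence $(2) \Leftrightarrow (3)$ is then purely topological. If $S_n(T)$ is finite then, being Hausdorff, it is discrete, so every type is isolated. Conversely, if every type $p$ is isolated by some $\varphi_p$, then the clopen singletons $[\varphi_p]$ form an open cover of the compact space $S_n(T)$, and any finite subcover exhausts $S_n(T)$, forcing it to be finite.

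For $(2) \Rightarrow (1)$ I would show that every countable model $\mcN$ of $T$ is $\omega$-saturated, and then invoke back-and-forth. Given a finite $A \subseteq N$, types over $A$ correspond to types over $\es$ in the expansion by constants naming $A$; since the number of $(n+|A|)$-types over $\es$ in $T$ is finite, so is the number of $n$-types over $A$ in the expansion, and the equivalence with $(3)$ applied to the expanded theory makes each such type isolated, hence realized in $\mcN$. Two countable $\omega$-saturated models of the same complete theory are isomorphic, so $\mcM$ is $\omega$-categorical.

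For $(1) \Rightarrow (2)$ I would argue contrapositively: if some $S_n(T)$ is infinite, then by the same compactness argument it contains a non-isolated point $p$. The omitting types theorem produces a countable model $\mcN_0$ of $T$ omitting $p$, while extending $T$ by a fresh tuple of constants satisfying $p$ and applying L\"owenheim--Skolem produces a countable model $\mcN_1$ realizing $p$; these two are non-isomorphic, contradicting $(1)$. The only nontrivial external input is the omitting types theorem; the main point to keep straight is that $(2)$ is preserved when finitely many constants are named, which is what lets $\omega$-saturation over finite parameter sets be reduced to isolation of types over $\es$.
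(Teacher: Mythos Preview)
The paper does not actually prove this statement: it records it as a ``well known fact'' (this is the Ryll--Nardzewski theorem) and uses it without further comment, relying on the ambient assumption of a finite relational vocabulary. Your sketch is the standard textbook proof and is correct; the only point worth flagging is that you are implicitly using that the language is countable (needed for omitting types and for the back-and-forth between countable $\omega$-saturated models), which in the paper's setting is guaranteed by the standing assumption of a finite relational signature.
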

Over a finite vocabulary it is clear that a structure which is homogenizable or homogeneous is also $\omega-$categorical. For an $\omega-$categorical structure $\mcM$ over a finite vocabulary, all types being isolated by quantifier free formulas (called quantifier elimination) is equivalent to $\mcM$ being homogeneous. Weakening the assumptions to $\mcM$ only being homogenizable it hence becomes natural to ask how the types now are being isolated. 
The amalgamation property still holds in homogenizable structures over realizations of types which are isolated by quantifier-free formulas.
The converse of the following lemma is not even true for $\Sigma_1-$homogenizable structures - see Example \ref{nonsymexa}.
\begin{lma}\label{prpamfree}
 If $\mcM$ is a structure and $\bar a \in M$ is such that $tp(\bar a)$ is isolated by a quantifier free formula then $\mcA = \mcM\reduct \bar a$ is an amalgamation base for $Age(\mcM)$.
\end{lma}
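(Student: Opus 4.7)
The plan is to use the fact that quantifier-free formulas are preserved by embeddings together with the isolation hypothesis to transfer the atomic diagram of one side of the amalgam onto the other via a $\Sigma_1$-formula realized in $\mcM$.

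First I would fix a quantifier-free formula $\varphi(\bar x)$ isolating $tp(\bar a)$ in $\mcM$. Take $\mcB,\mcC\in Age(\mcM)$ together with embeddings $f:\mcA\to\mcB$ and $g:\mcA\to\mcC$. Since $\mcB,\mcC\in Age(\mcM)$, we may assume after composing with embeddings that $\mcB,\mcC\subseteq\mcM$, so $f(\bar a),g(\bar a)\in M$. Because embeddings preserve quantifier-free formulas we get $\mcM\models\varphi(f(\bar a))$ and $\mcM\models\varphi(g(\bar a))$, and isolation then yields $tp^\mcM(f(\bar a))=tp^\mcM(g(\bar a))=tp^\mcM(\bar a)$.

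Next I would use the atomic diagram of $\mcB$ to produce the amalgam. Write $B=f(\bar a)\cup\{b_1,\dots,b_m\}$ and let $\delta_\mcB(\bar x,\bar y)$ be the conjunction of the atomic and negated atomic formulas describing $\mcM\reduct B$ with $\bar x$ naming $f(\bar a)$ and $\bar y$ naming $b_1,\dots,b_m$. The $\Sigma_1$-sentence $\exists\bar y\,\delta_\mcB(f(\bar a),\bar y)$ holds in $\mcM$, so $\exists\bar y\,\delta_\mcB(\bar x,\bar y)$ lies in $tp^\mcM(f(\bar a))=tp^\mcM(g(\bar a))$. Hence there exist $b'_1,\dots,b'_m\in M$ with $\mcM\models\delta_\mcB(g(\bar a),b'_1,\dots,b'_m)$. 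This produces an embedding $f_0:\mcB\to\mcM$ sending $f(\bar a)$ to $g(\bar a)$ and $b_i$ to $b'_i$. Setting $\mcD=\mcM\reduct(f_0(B)\cup C)\in Age(\mcM)$, $g_0:\mcC\hookrightarrow\mcD$ the inclusion, and $f_0$ viewed as an embedding into $\mcD$, we have $f_0\circ f=g\circ\mr{id}_A=g_0\circ g$, so $\mcA$ is an amalgamation base for $Age(\mcM)$.

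There is no real obstacle here beyond being careful about the logical direction: qf-formulas are preserved by embeddings (so $\varphi$ passes to $f(\bar a)$ and $g(\bar a)$) while existential diagram formulas are preserved under equality of complete types (so the diagram of $\mcB$ can be realized over $g(\bar a)$). The key conceptual point is that we do not need $\mcM$ to be homogeneous; the isolation of $tp(\bar a)$ by a quantifier-free formula is exactly what lets qf-equivalence of $\bar a,f(\bar a),g(\bar a)$ upgrade to full type-equivalence, which is all that is needed to glue $\mcB$ and $\mcC$ inside $\mcM$.
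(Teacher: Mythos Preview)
Your proof is correct and follows essentially the same approach as the paper: use quantifier-free absoluteness of the isolating formula to conclude $tp^\mcM(f(\bar a))=tp^\mcM(g(\bar a))$, then realize the atomic diagram of one side over the other inside $\mcM$ to build the amalgam. The only cosmetic difference is that the paper realizes both $\mcB$ and $\mcC$ over the original tuple $\bar a$, whereas you realize $\mcB$ over $g(\bar a)$ and leave $\mcC$ in place via the inclusion; both variants yield the amalgam immediately.
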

\begin{proof}
Assume that $f:\mcA\rightarrow \mcB$ and $g:\mcA\rightarrow \mcC$ for some $\mcB,\mcC\in Age(\mcM)$. As $tp^\mcM(\bar a)$ is isolated by the atomic diagram $\chi_\mcA$ of $\mcA$ we see that $tp^\mcM(\bar a)=tp^\mcM(f(\bar a))=tp^\mcM(g(\bar a))$. Hence $\mcM\models \chi_{\mcA}(\bar a)\rightarrow \exists \bar y \chi_\mcB (\bar a,\bar y)$ and $\mcM\models \chi_{\mcA}(\bar a)\rightarrow \exists \bar y \chi_\mcC (\bar a,\bar y)$. Let $\bar b$ and $\bar c$ be such that $\mcM\models \chi_{\mcA}(\bar a)\rightarrow  \chi_\mcB (\bar a,\bar b) \wedge \chi_\mcC (\bar a,\bar c)$ and put $\mcD = \mcM\reduct \bar a\bar b\bar c$, so $\mcD\in Age(\mcM)$. If $h_0:\mcB\rightarrow\mcD$ according to $\chi_\mcB$ and $h_1:\mcC\rightarrow\mcD$ according to $\chi_\mcC$, then $h_0(f(x)) = h_1(g(x))$ for each $x\in A$. We conclude that $\mcD$ is an amalgam for $f$ and $g$, thus $\mcA$ is an amalgamation base.
\end{proof}
Adding some more assumptions we may prove the converse of the previous lemma.
\begin{lma}\label{amtofree} Let $\mcM$ be $\omega-$categorical, model-complete and for $\bar a\in M$ let $\mcA=\mcM\reduct \bar a$. If $\mcA$ is an amalgamation base for $Age(\mcM)$ then $tp(\bar a)$
is isolated by a quantifier free formula. 
\end{lma}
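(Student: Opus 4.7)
The plan is to argue by contradiction: assuming $tp(\bar a)$ is not isolated by a quantifier-free formula, I will produce a single tuple in $\mcM$ witnessing a formula and its negation simultaneously, using model-completeness to turn an isolating formula and its negation into two $\Sigma_1$-formulas that can be handled by an amalgamation argument.

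First I would set $\chi_\mcA(\bar x)$ to be the (finite, since the vocabulary is finite) atomic diagram of $\mcA=\mcM\reduct\bar a$. If $\chi_\mcA$ does not isolate $tp(\bar a)$, there exists $\bar a'\in M$ with $\mcM\models\chi_\mcA(\bar a')$ but $tp(\bar a)\neq tp(\bar a')$, and the assignment $\bar a\mapsto\bar a'$ gives an isomorphism $f:\mcA\to\mcM\reduct\bar a'$ (since both structures realize the same quantifier-free type). Pick a formula $\varphi(\bar x)$ with $\mcM\models\varphi(\bar a)\wedge\neg\varphi(\bar a')$.

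Now I would invoke model-completeness: both $\varphi(\bar x)$ and $\neg\varphi(\bar x)$ are equivalent in $Th(\mcM)$ to $\Sigma_1$-formulas, say $\exists\bar y\,\psi(\bar x,\bar y)$ and $\exists\bar y\,\psi'(\bar x,\bar y)$ with $\psi,\psi'$ quantifier-free. Let $\bar b,\bar c\in M$ realise $\psi(\bar a,\bar b)$ and $\psi'(\bar a',\bar c)$, and set $\mcB=\mcM\reduct\bar a\bar b$ and $\mcC=\mcM\reduct\bar a'\bar c$, both in $Age(\mcM)$. The inclusion $\mcA\hookrightarrow\mcB$ and the embedding $f:\mcA\to\mcC$ can now be amalgamated using the assumption that $\mcA$ is an amalgamation base: there exist $\mcD\in Age(\mcM)$ and embeddings $f_0:\mcB\to\mcD$, $g_0:\mcC\to\mcD$ agreeing on $\mcA$, i.e.\ $f_0(\bar a)=g_0(\bar a')$.

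Since $\mcD\in Age(\mcM)$, fix an embedding $h:\mcD\to\mcM$ and put $\bar a^*=h(f_0(\bar a))=h(g_0(\bar a'))$, $\bar b^*=h(f_0(\bar b))$, $\bar c^*=h(g_0(\bar c))$. Embeddings preserve quantifier-free formulas, so $\mcM\models\psi(\bar a^*,\bar b^*)\wedge\psi'(\bar a^*,\bar c^*)$, whence $\mcM\models\exists\bar y\,\psi(\bar a^*,\bar y)\wedge\exists\bar y\,\psi'(\bar a^*,\bar y)$, i.e.\ $\mcM\models\varphi(\bar a^*)\wedge\neg\varphi(\bar a^*)$, a contradiction. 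Therefore $\chi_\mcA$ isolates $tp(\bar a)$, as required. The core of the argument—and the only non-routine step—is the dual use of model-completeness on $\varphi$ and $\neg\varphi$ to obtain two extension diagrams in $Age(\mcM)$ that can be fed into amalgamation; everything else is bookkeeping.
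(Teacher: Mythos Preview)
Your proof is correct and follows essentially the same route as the paper: both arguments use model-completeness to replace the relevant formulas by $\Sigma_1$-formulas, pick witnesses inside $\mcM$ to obtain two finite extensions of $\mcA$, and then amalgamate over $\mcA$ to force a collision. The only cosmetic difference is that the paper works with the $\Sigma_1$-isolating formulas of $tp(\bar a)$ and $tp(\bar a')$ (using $\omega$-categoricity to obtain them) and concludes directly that the two types coincide, whereas you pick a single separating formula $\varphi$, apply model-completeness to both $\varphi$ and $\neg\varphi$, and derive a contradiction; a pleasant side effect is that your version never actually invokes the $\omega$-categoricity hypothesis.
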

\begin{proof}
If $\bar a'$ has the same atomic diagram as $\bar a$ let $\varphi,\psi$ be the $\Sigma_1-$formulas isolating the types of each respective tuple. Let $\bar b,\bar c$ be tuples witnessing the existential quantifiers isolating formulas of $\bar a$ respectively $\bar a'$ and put $\mcB=\mcM\reduct \bar b\bar a$ and $\mcC = \mcM\reduct \bar c\bar a'$. Since $\mcA$ is an amalgamation base the embeddings $f:\mcA\rightarrow \mcB$, $g:\mcA\rightarrow \mcC$ should have an amalgam $\mcD\subseteq \mcM$ with embeddings $f_0:\mcB\rightarrow \mcD$ and $g_0:\mcC\rightarrow \mcD$. However the atomic diagram of $f_0(\bar a\bar b)$ and $g_0(\bar c\bar a')$ implies that $tp(f_0(f(\bar a)))=tp(\bar a)$ and $tp(g_0(g(\bar a)))=tp(\bar a')$ respectively. As $\mcD$ is an amalgam of $f$ and $g$ it thus follows that $tp(\bar a)=tp(f_0(f(\bar a)))=tp(g_0(g(\bar a)))=tp(\bar a')$.
\end{proof}

\begin{lma}\label{qrfreelma}Let $\mcM$ be a saturated countably infinite structure with $\bar a\in M$ and put $\mcA = \mcM\reduct \bar a$. Each embedding $f:\mcA\rightarrow \mcM$ may be extended into an automorphism of $\mcM$ if and only if $tp(\bar a)$ is isolated by a quantifier free formula.
\end{lma}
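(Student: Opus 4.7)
My plan is to prove each direction separately, using the standard fact that a saturated countable model is homogeneous in the sense of model theory (every partial elementary map from a finite subset to $\mcM$ extends to an automorphism of $\mcM$).

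For the $(\Leftarrow)$ direction, suppose a quantifier free formula $\chi(\bar x)$ isolates $tp(\bar a)$. Let $f:\mcA\rightarrow \mcM$ be any embedding. Since $f$ is an embedding, $f(\bar a)$ has the same atomic diagram as $\bar a$, so in particular $\mcM\models \chi(f(\bar a))$. Because $\chi$ isolates $tp(\bar a)$, this forces $tp^\mcM(f(\bar a)) = tp^\mcM(\bar a)$, so the finite partial map $\bar a \mapsto f(\bar a)$ is elementary. Saturation together with countability gives homogeneity of $\mcM$, hence this partial elementary map extends to an automorphism of $\mcM$ that restricts to $f$ on $\mcA$.

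For the $(\Rightarrow)$ direction, suppose every embedding $f:\mcA\rightarrow\mcM$ extends to an automorphism of $\mcM$. Since the vocabulary is finite and relational and $\bar a$ is a finite tuple, the atomic diagram $\chi_\mcA(\bar x)$ of $\mcA$ is (up to equivalence) a single quantifier free formula. I claim $\chi_\mcA$ isolates $tp^\mcM(\bar a)$. Indeed, suppose $\bar a' \in M$ satisfies $\mcM\models \chi_\mcA(\bar a')$. Then the map sending $\bar a$ to $\bar a'$ is an embedding $f:\mcA\rightarrow\mcM$, which by hypothesis extends to an automorphism of $\mcM$; in particular $tp^\mcM(\bar a')=tp^\mcM(\bar a)$. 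Hence every realization of $\chi_\mcA$ has the same complete type as $\bar a$, which is precisely what it means for $\chi_\mcA$ to isolate $tp^\mcM(\bar a)$.

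No step here strikes me as a real obstacle; the only subtlety worth flagging is the appeal to the classical theorem that countable saturated structures are homogeneous, which is what converts the elementary equivalence $tp(\bar a)=tp(f(\bar a))$ into an actual automorphism extending $f$. The finiteness of the relational vocabulary is essential in the $(\Rightarrow)$ direction, as it is what makes the atomic diagram of a finite tuple expressible as a single quantifier free formula.
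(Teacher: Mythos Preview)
Your proof is correct and follows essentially the same approach as the paper's own proof, which is a compressed version of exactly this argument: it notes that saturation gives ``$tp(\bar a)=tp(f(\bar a))$ iff $f$ extends to an automorphism'' and that ``$f$ is an embedding iff $\bar a$ and $f(\bar a)$ satisfy the same atomic diagram.'' You have simply unpacked both directions explicitly and flagged the standard homogeneity-of-countable-saturated-models fact and the role of the finite relational vocabulary.
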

\begin{proof} If $f:\mcA\rightarrow \mcM$ is an embedding then by the saturation of $\mcM$, $tp(\bar a) = tp(f(\bar a))$ if and only if $f$ may be extended into an automorphism. But $f$ is an embedding if and only if $\bar a$ and $f(\bar a)$ satisfies the same atomic diagram.
\end{proof}
\noindent The previous lemma hints that having a type isolated by a quantifier free formula implies that the specific tuple does its part in trying to make the structure homogeneous. Following from this we introduce three new concepts of homogenizable structures, assuming different levels of how easy it is to find a type which is isolated by a quantifier free formula.
\begin{defi}
Let $\mcM$ be a structure and $k\in\mbbN$. We say that $\mcM$ is $k-$\textbf{unavoidably homogenizable} if, for each $n \in \{k,k+1,\ldots\}$, each $n-$type is isolated by quantifier free formula. $\mcM$ is \textbf{unavoidably homogenizable} if it is $k-$unavoidably homogenizable for some $k\in\mbbN$.
\end{defi}
\noindent The unavoidably homogenizable structures are as close to being homogeneous as it gets, yet they do not seem easy to classify completely as we will see in Section \ref{unavoidsec}.
\begin{defi}
A homogenizable structure $\mcM$ is called \textbf{uniformly homogenizable} if there is a tuple $\bar a\in M$ such that for any $\bar b\in M$, $tp(\bar a\bar b)$ is isolated by a quantifier free formula.
\end{defi}
\noindent As will be made clear in Section \ref{unisec}, the uniformly homogenizable structures contain many trivial kinds of homogenizable structures yet are also quite central among homogenizable structures as Proposition \ref{mequni} and Theorem \ref{unatouni} show. 
\begin{defi}\label{symdef} A homogenizable structure $\mcM$ is \textbf{boundedly homogenizable} if for each $\bar a\in M$ there exists a $\bar b$ such that $tp(\bar a\bar b)$ is isolated by a quantifier free formula.
\end{defi}
\noindent The boundedly homogenizable structures form a very broad class of structures and it seems like most examples found in the literature fall in here, as we see in the examples bellow.
\begin{rmk} We note the following implications. Examples showing that these are all strict are provided below and in later sections we will explore some of the classes more. The second implication follows using Lemma \ref{unavoidlma} while the fourth implication uses Lemma \ref{symtosigma}.
\[\text{ Homogeneous} \Rightarrow \text{ Unavoidably homogenizable}
\Rightarrow\]\[ \text{ Uniformly homogenizable} \Rightarrow \text{Boundedly homogenizable} \Rightarrow  \]\[\text{$\Sigma_1-$homogenizable} \Rightarrow \text{Homogenizable}.\]
\end{rmk}

\begin{exa}[Kolaitis, Pr\"omel, Rothschild \cite{KPR}]\label{bipexa}
For some $l\in\mbbN$ let $\mbK_n$ be all $l-$partite graphs with universe $\{1,\ldots,n\}$, edge relation $E$ and let $\mu_n$ be the probability measure on $\mbK_n$ such that for each $\mcM\in\mbK_n$, $\mu_n(\mcM) = \frac{1}{|\mbK_n|}$. Put $T_\mbK$ to be the theory (called the almost sure theory) consisting of all sentences $\varphi$ such that 
\[\lim_{n\rightarrow\infty}\mu_n(\{\mcM\in \mbK_n : \mcM\models \varphi \}) = 1\]
$T_\mbK$ is $\omega-$categorical and the unique countable model $\mcN\models T_\mbK$, called the random $l-$partite graph has following property: For each $a,b\in N$, $a$ and $b$ belong to the same part if and only if $\mcN$ satisfies
\[\exists x_{2}\ldots\exists x_l \bigwedge_{i=2}^l\bigwedge_{i\neq j}(aEx_i \wedge bEx_i \wedge x_iEx_j).\]
If we let $\xi(a,b)$ be the formula above, then it is easy to prove $\xi$ is a homogenizing formula, thus $\mcN$ is homogenizable.
Using a generalization of $\xi$ we may, for any tuple $\bar a\in N$, find $l$ elements $b_1,\ldots ,b_l\in N$ such that the tuple $\bar ab_1\ldots b_l$ is a connected graph and of diameter 3 in $\mcN$. It is easy to see that any such tuple $\bar a b_1\ldots b_l$ in $\mcN$ has a type which is isolated by a quantifier free formula, and hence we have found that $\mcN$ is boundedly homogenizable. The structure is not uniformly homogenizable since for any tuple $\bar b$ we can find an element $c$ which is not adjacent to any elements in $\bar b$, which clearly means that the tuple $c\bar b$ may be mapped such that $c$ is in the wrong part compared to the tuples in $\bar b$.
\end{exa}
\noindent As we will see in Section \ref{unisec} and especially Proposition \ref{finsatprp}, it is easy to create a uniformly homogenizable structure. We may just take the infinite complete graph and remove a single edge. The following example however shows that they may not be at all trivial even though the homogenization still is.
\begin{exa}\label{bipeqexa}
Let $\mcM$ be the random $l-$partite graph obtained from Example \ref{bipexa}, but where we add new elements $a_1,a_2$ to the universe, and add two new relations $P$ and $R$ to the vocabulary. Let $P$ be unary and $P^\mcM = \{a_1,a_2\}$. 
Let $R$ be a $3-$ary relation such that $\mcM\models R(b,c,a_1)$ if and only if $b$ and $c$ are in the same part. If $b$ and $c$ are not in the same part then $\mcM\models R(b,c,a_2)$.  This is the construction from Proposition \ref{mequni} and $\mcM$ is hence a uniformly homogenizable structure. For any tuple $\bar c$, the type $tp(\bar c a_1a_2)$ will be isolated by a quantifier free formula as $a_1,a_2$ will be able to point out which elements in $\bar c$ belong to the same part. $Age(\mcM)$ does not satisfy the local failure of amalgamation property (LFA) discussed by Covington \cite{C}. This follows quickly since the random $l-$partite graph does not satisfy LFA (Covington points this out for bipartite graphs, and a similar reasoning works for $l-$partite graphs) and the same argument can be extended to $Age(\mcM)$.
\end{exa} 
\noindent In the next example we see that the strict order property may appear and thus there are boundedly homogenizable non-homogeneous structures which are not simple (see \cite{TZ} for detailed definitions of these concepts).
\begin{exa}[Bodirsky et. al. \cite{BBPP}]\label{treeexa}Let $\mcM$ be the countable, binary do\-wn\-wards-branching, dense, unbounded, semi-linear order without joins. This structure is boundedly homogenizable with a single homogenizing formula $C(x,y,z)$ saying, for incomparable vertices $x$, $y$ and $z$ that there is an element $c$ which is larger than $x$ and $y$ but still incomparable with $z$, i.e. in some sense $x$ and $y$ are closer to each other than to $z$.
For any tuple $\bar b$, and triple $b_0,b_1,b_2\in \bar b$ such that $\mcM\models C(b_0,b_1,b_2)$ let $c_0$ be an element witnessing this and let $\bar c$ be a tuple containing such witnesses for any triple in $\bar b$ satisfying $C$.  If this process is continued for $\bar b\bar c$ we will, in a finite amount of steps, reach a tuple $\bar b\bar d$ which is a finite binary tree. Thus this tuple has a type which is isolated by a quantifier free formula.
\end{exa}
\noindent The boundedly homogenizable structures are very common in examples of homogenizable structures in the literature. There are however homogenizable structures which are not boundedly homogenizable. 

\begin{exa}\label{nonsymexa} Let $\mcM = (\mbbQ^+ \cup\{0\}, <)$ be  the countable dense linear ordering without upper bound but with a lower endpoint. This structure is not homogeneous since the smallest element may never be mapped by an automorphism to anything but itself. However the formula $\exists y (y<x)$ creates a $\Sigma_1-$homogenization for $\mcM$. No type in $\mcM$ is isolated by a quantifier free formula, since the least element in any tuple can not be determined (without quantifiers) to be the endpoint $0$ or not. Hence $\mcM$ is $\Sigma_1-$homogenizable but not boundedly homogenizable and not model-complete. We have that $Age(\mcM) =Age( (\mbbQ, <))$. 
\end{exa}
\noindent All examples up until now have been $\Sigma_1-$homogenizable. However there are non $\Sigma_1-$homogenizable structures, as the following example shows. This article does not further explore these structures, and the following question remains open.
\begin{quest}Does there exists a $\Sigma_n-$ but not $\Sigma_{n-1}-$homogenizable structure for each $n\in \mbbN$?
\end{quest}
\begin{exa}\label{nosigmaex}
Let $\mcM$ be the structure with universe $(\mbbQ^+\cup\{0\}) \dot\cup (\mbbQ^- \cup \{0\})$, and with a binary relation $<$ interpreted as the strict linear order on each part of the disjoint union, yet $<$ does not compare elements from different parts of the disjoint union.
This structure is homogenizable by the formulas $\exists y(x < y)$, $\exists y(y<x)$ and $\exists y\forall z (\neg z< y \wedge y< x)$. The first two formulas makes the two endpoints stand out and the third formula makes it impossible to mix together the elements of $\mbbQ^-$ and $\mbbQ^+$. It is thus clear that $\mcM$ is unary homogenizable. We may also notice that the structure $\mcM$ is not model-complete, since the structure with universe $\mbbQ\dot\cup \mbbQ$ together with the expected order relation on each of the two disjoint sets, has the same age and is homogeneous. \\\indent
Let $f:\mcM\rightarrow\mcM$ be such that $\mbbQ^+\cup\{0\}$ is mapped to the half-open interval $[-1,0)$ and $\mbbQ^-\cup\{0\}$ is mapped to $(0,1]$, both in an order-preserving way. This function is a self-embedding of $\mcM$ and hence it preserves the $\Sigma_1-$formulas. We may conclude that any element $a\in\mbbQ^-$ and $b\in\mbbQ^+$ satisfy the same $\Sigma_1-$formulas in $\mcM$. We conclude that, since $\mcM$ is unary homogenizable, it is not possible to homogenize $\mcM$ using only $\Sigma_1-$formulas.
\end{exa}
By Theorem \ref{FRthm} it is sufficient for a set of finite structures to satisfy HP and AP in order to generate a homogeneous structure, and one might ask if there is a similar condition which guarantees the existence of a homogenizable structure. The following property solves this problem for ages of $\omega-$categorical structures.
\begin{defi}\label{SEAPdef}
Let $\mbK$ be a class of finite structures and $k,m\in \mbbN$. Define the $(k,m)$-\textbf{subextension amalgamation failure property} (SEAP$_{k,m}$) to be the following. For any $\mcA,\mcB,\mcC\in \mbK$ with embeddings $f:\mcA\rightarrow\mcB,\>\>g:\mcA\rightarrow \mcC$ without an amalgam, there exist $\mcA_0\subseteq \mcA$, $\mcB_0\supseteq \mcB$ and $\mcC_0\supseteq \mcC$ with $|A_0|<k$, $|B_0|-|B|<m$ and $|C_0|-|C|<m$ such that $f_0:\mcA_0\rightarrow \mcB_0$ and $g_0:\mcA_0\rightarrow\mcC_0$ with $f_0=f\reduct {A_0}$ and $g_0=g\reduct {A_0}$ do not have an amalgam. We say that $\mbK$ satisfies SEAP if it satisfies SEAP$_{k,m}$ for some $k,m\in\mbbN$.
\end{defi}
It is clear that any set of structures which satisfies AP will satisfy SEAP since SEAP only speaks about how failing amalgamations should behave.
As we have all necessary definitions we may now start with the lemmas necessary to prove Theorem \ref{mcmainthm}. The proof of the first lemma is done by assigning relations on all small enough types and then showing, using SEAP, that this creates a homogeneous structure.
\begin{lma} Let $\mcN$ be a model-complete $\omega-$categorical countably infinite structure. If $Age(\mcN)$ satisfies SEAP then $\mcN$ is homogenizable.
\end{lma}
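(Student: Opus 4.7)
The plan is to expand $\mcN$ by a relation symbol for each complete type over $\emptyset$ of bounded arity and then verify, using SEAP, that the expansion is homogeneous. Fix $k,m \in \mbbN$ for which $Age(\mcN)$ satisfies SEAP$_{k,m}$, let $r$ denote the maximum arity appearing in $V$, and set $N := \max(r, k-1)$. By $\omega$-categoricity, the set of complete types over $\emptyset$ of arity at most $N$ is finite; by model-completeness each such type $p$ is isolated by a $\Sigma_1$-formula $\varphi_p$. I would introduce a new relation symbol $R_p$ of the appropriate arity for each such $p$, let $V'$ be the resulting finite vocabulary, and take $\mcN'$ to be the $V'$-expansion of $\mcN$ in which each $R_p$ is interpreted as its set of realizations. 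Homogenizability of $\mcN$ then reduces to showing that $\mcN'$ is homogeneous.

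Since $\mcN'$ is $\omega$-categorical over the finite vocabulary $V'$ it is $\omega$-saturated. Homogeneity therefore reduces to the claim that any $\bar a, \bar b \in \mcN$ sharing the same quantifier-free $V'$-type realize the same complete $V$-type in $\mcN$: an automorphism of $\mcN$ witnessing this preserves the $\emptyset$-definable relations $R_p$ and so extends the partial isomorphism $\bar a \mapsto \bar b$ to an element of $\Aut(\mcN')$. Unpacking, equal quantifier-free $V'$-type says that every subtuple of $\bar a$ of arity at most $N$ realizes the same complete $V$-type as the corresponding subtuple of $\bar b$; since $N \geq r$ the positional bijection $f : \bar a \mapsto \bar b$ is in particular a $V$-isomorphism of $\mcA := \mcN \reduct \bar a$ and $\mcB := \mcN \reduct \bar b$.

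I would argue the claim by contradiction. By model-completeness the claim fails at some $\Sigma_1$-formula; choosing it to be the full quantifier-free diagram of an appropriate extension, one obtains a quantifier-free $\chi(\bar x, \bar y)$ and $\bar c \in \mcN$ with $\mcN \models \chi(\bar a, \bar c)$ but no $\bar d$ satisfying $\chi(\bar b, \bar d)$. Setting $\mcC := \mcN \reduct \bar a \bar c$, the isomorphism $f : \mcA \to \mcB$ together with the inclusion $g : \mcA \to \mcC$ form an amalgamation problem in $Age(\mcN)$ with no amalgam, for any such amalgam sits inside $\mcN$ and supplies the missing $\bar d$. SEAP$_{k,m}$ then yields a subtuple $\bar a_0 \subseteq \bar a$ with $|\bar a_0| < k$ and extensions $\mcB_0 \supseteq \mcB$ and $\mcC_0 \supseteq \mcC$ in $Age(\mcN)$ whose restricted amalgamation problem still has no solution.

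The contradiction comes from exhibiting such an amalgam inside $\mcN$. Since $|\bar a_0| < k \leq N+1$ the hypothesis gives $tp^\mcN(\bar a_0) = tp^\mcN(f(\bar a_0))$, so an automorphism $\sigma \in \Aut(\mcN)$ moves $\bar a_0$ to $\bar b_0 := f(\bar a_0)$. By $\omega$-saturation applied to the $\Sigma_1$-formulas expressing the relevant diagrams, one realizes $\mcB_0$ inside $\mcN$ as an extension of $\bar b$ and $\mcC_0$ as an extension of $\bar a \bar c$; composing the second realization with $\sigma$ aligns its image of $\bar a_0$ with $\bar b_0 = \sigma(\bar a_0)$, and the $V$-substructure of $\mcN$ generated by the two aligned realizations is an amalgam in $Age(\mcN)$, contradicting SEAP. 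The main obstacle is precisely this realization step: one must argue that the complete type of $\bar b$ (respectively $\bar a \bar c$) in $\mcN$ contains the $\Sigma_1$-formula witnessing extension to $\mcB_0$ (respectively $\mcC_0$), and this is where the matching-subtuple-types hypothesis, model-completeness, and SEAP must be combined carefully.
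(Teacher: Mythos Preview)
Your overall strategy---expand by relations for the small-arity types and derive a contradiction via SEAP---is exactly the paper's, but the argument has a genuine gap, and it occurs earlier than the ``realization step'' you flag. When you claim that $f:\mcA\to\mcB$ and $g:\mcA\to\mcC$ have no amalgam, you argue that an amalgam would ``sit inside $\mcN$ and supply the missing $\bar d$''. But an amalgam $\mcD\in Age(\mcN)$ only \emph{embeds} into $\mcN$; the image of $\bar b$ under that embedding is some tuple $\bar b'$ with the same atomic $V$-diagram as $\bar b$, and all you obtain is $\mcN\models\exists\bar y\,\chi(\bar b',\bar y)$. Since you have not yet established $tp(\bar b')=tp(\bar b)$ (that is precisely what is at stake), this does not contradict $\mcN\models\neg\exists\bar y\,\chi(\bar b,\bar y)$. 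The same issue recurs in your final step: embedding $\mcB_0$ or $\mcC_0$ into $\mcN$ gives no control over the type of the image of $\bar b_0$ or $\bar a_0$, and $\omega$-saturation cannot help because consistency of the extension type over the actual tuple $\bar b$ is exactly what is in doubt.

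The paper's fix is to load $\mcB$ and $\mcC$ with witnesses from the start. Instead of taking $\mcB=\mcN\reduct\bar b$, it uses that by model-completeness and $\omega$-categoricity each of $tp(\bar a)$, $tp(\bar b)$ is isolated by a $\Sigma_1$-formula, and takes $\mcB$ and $\mcC$ to contain the witnessing tuples for those isolating formulas. Now any amalgam embedded in $\mcN$ forces the common image to realize both $tp(\bar a)$ and $tp(\bar b)$, so ``no amalgam'' is immediate; and after SEAP, any embedding of $\mcB_0\supseteq\mcB$ or $\mcC_0\supseteq\mcC$ into $\mcN$ automatically preserves the full type of (the image of) $\bar b$ resp.\ $\bar a$, hence of the subtuple $\bar b_0$ resp.\ $\bar a_0$. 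Since the corresponding subtuples have the same type by the expansion, an automorphism aligns them and yields the amalgam. Your argument becomes correct once you make this single change to the definition of $\mcB$ and $\mcC$.
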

\begin{proof}
Let $m$ and $k$ be numbers such that $Age(\mcN)$ satisfies SEAP$_{k,m}$. As $\mcN$ is $\omega-$categorical there are only a finite amount of types of the tuples of size less than $k$. Let $V'\supseteq V$ be the extended vocabulary where, for each $i<k$, and $i-$type over $\emptyset$ there is an $i-$ary new relational symbol. Let $\overline \mcN$ be the $V'-$structure such that $\mcN = \overline \mcN \reduct V$ and for each relational symbol $R$ in $V'-V$ there is a distinct complete type $p(\bar x)$ over $\emptyset$ in $\mcN$ such that for each $\bar a \in N$, $\mcN\models p(\bar a)$ if and only if $\overline \mcN\models R(\bar a)$ and all interpretations of the relations $V'-V$ in $\overline\mcN$ are disjoint. Thus the new relational symbols isolate the $i-$types in $\mcN$ for each $i<k$ and as $\mcN$ is $\omega-$categorical these relations are $\emptyset-$definable. We claim that $\overline \mcN$ is homogeneous, and thus $\mcN$ was homogenizable.\\\indent
In search for a contradiction, assume that $\overline \mcN$ is not homogeneous, so there exist tuples $\bar a_1,\bar a_2\in \overline N$ with the same atomic diagram such that $tp^{\overline \mcN}(\bar a_1)\neq tp^{\overline \mcN}(\bar a_2)$. As $\overline{\mcN}$ is just an expansion by $\emptyset-$definable relations, it follows that $tp^\mcN(\bar a_1) \neq tp^\mcN(\bar a_2)$. The model-completeness and $\omega-$categoricity of $\mcN$ implies that all types are isolated by $\Sigma_1-$formulas. Let $\bar c\supseteq \bar a_1$ and $\bar b\supseteq \bar a_2$ be tuples such that the existential quantifiers of the formulas isolating $tp^\mcN(\bar a_1)$ and $tp^\mcN(\bar a_2)$ respectively are witnessed by some subtuple. Let $\mcA = \mcN\reduct \bar a_1, \mcB = \mcN\reduct \bar b, \mcC = \mcN \reduct \bar c$, and note that since $tp^\mcN(\bar a_1)\neq tp^\mcN(\bar a_2)$ is witnessed in $\mcB$ and $\mcC$, the functions $f:\mcA\rightarrow \mcB$ and $g:\mcA\rightarrow \mcC$, where $f$ maps $\bar a_1$ to $\bar a_1$ and $g$ maps $\bar a_1$ to $\bar a_2$, can not have an amalgam in $Age(\mcN)$. As $Age(\mcN)$ satisfies SEAP$_{k,m}$ there exists $\mcA_0\subseteq \mcA, \mcB_0\supseteq \mcB$ and $\mcC_0\supseteq \mcC$ such that $|A_0|<k$, $|\mcB_0|-|\mcB|<m, |\mcC_0|-|\mcC|<m$ and the induced functions $f\reduct A_0$ and $g\reduct A_0$ do not have an amalgam. This in turn implies that there are embeddings $f_0:\mcB_0\rightarrow \mcN, g_0:\mcC_0\rightarrow \mcN$ such that $tp^\mcN(f_0(A_0)) \neq tp^\mcN(g_0(A_0))$. Let $\bar a_1',\bar a_2'$ be the subtuples of $\bar a_1$ and $\bar a_2$ which are represented in $A_0$. Both $\bar a_1$ and $\bar a_2$ had the same atomic diagram in $\overline \mcN$ thus $tp^\mcN(\bar a_1')=tp^\mcN(\bar a_2')$. As $\mcB_0$ and $\mcC_0$ contain witnesses for the isolating formulas of $tp^\mcN(\bar a_1)$ and $tp^\mcN(\bar a_2)$ respectively these witnesses also isolate $tp(\bar a_1')$  and $tp(\bar a_2')$. Thus we conclude that $tp^\mcN(f_0(A_0))=tp^\mcN(g_0(A_0))$ has to hold, which is a contradiction to what we previously showed.
\end{proof}
If we do not have the amalgamation property in the age of a $\Sigma_1-$homo\-ge\-ni\-zable structure, then for each diagram $f:\mcA\to\mcB, g:\mcA\to\mcC$ which does not have an amalgam there should be a homogenizing formula such that for some tuple $\bar a\in A$, this tuple satisfies the homogenizing formula in $\mcB$ but does not satisfy this formula in $\mcC$. This is the core reasoning behind the following lemma.
\begin{lma}
If $\mcM$ is a homogenizable model-complete structure then $\mcM$ is $\Sigma_1-$homogenizable, with all types isolated by a conjunction of the homogenizing formulas and quantifier free formulas, and $Age(\mcM)$ satisfies SEAP.
\end{lma}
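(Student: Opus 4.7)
My plan is to prove the three claims of the lemma in turn: that $\mcM$ is $\Sigma_1$-homogenizable, that every type is isolated by a conjunction of homogenizing formulas with a quantifier-free formula, and that $Age(\mcM)$ satisfies SEAP. The first two claims are short. Since any homogenizable structure over a finite relational vocabulary is $\omega$-categorical, model-completeness of $\mcM$ implies that every formula is equivalent to a $\Sigma_1$-formula over $Th(\mcM)$, so we may replace each $\varphi_i$ by a $\Sigma_1$-equivalent without altering the expansion $\overline{\mcM}$; this gives $\Sigma_1$-homogenizability. For the second claim, $\overline{\mcM}$ is homogeneous and $\omega$-categorical, hence every type over $\emptyset$ in $\overline{\mcM}$ is isolated by its quantifier-free $V'$-diagram; unwinding each $R_i$ as $\varphi_i$ (and using model-completeness to replace each $\neg\varphi_i$ by a $\Sigma_1$-equivalent, adding these to the homogenizing family) rewrites the diagram as a conjunction of homogenizing formulas with a quantifier-free $V$-formula, which still isolates the complete $V$-type since $\overline{\mcM}$ is a $\emptyset$-definable expansion of $\mcM$.

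For SEAP, the key input is that $\overline{\mbK}:=Age(\overline{\mcM})$ satisfies the amalgamation property by \Fr s theorem. Let $k$ exceed the maximal arity among the $\varphi_i$'s, write each $\varphi_i=\exists\bar y\,\psi_i(\bar x,\bar y)$ and $\neg\varphi_i\equiv\exists\bar z\,\chi_i(\bar x,\bar z)$ in $\Sigma_1$ form (using model-completeness), and let $m$ exceed the maximum length of any such $\bar y$ or $\bar z$. Suppose $f:\mcA\to\mcB$ and $g:\mcA\to\mcC$ in $Age(\mcM)$ admit no amalgam. Fix embeddings $\iota_B:\mcB\to\mcM$ and $\iota_C:\mcC\to\mcM$; the $V'$-structures inherited from $\overline{\mcM}$ give expansions $\overline{\mcB},\overline{\mcC}\in\overline{\mbK}$, and pulling back along $f,g$ yields two candidate $V'$-expansions $\overline{\mcA}_f,\overline{\mcA}_g\in\overline{\mbK}$ of $\mcA$. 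If $\overline{\mcA}_f=\overline{\mcA}_g$, then $f$ and $g$ become $V'$-embeddings into $\overline{\mcB}$ and $\overline{\mcC}$, and the AP of $\overline{\mbK}$ would yield a $V'$-amalgam whose $V$-reduct amalgamates $f,g$ in $Age(\mcM)$, a contradiction. Hence $\overline{\mcA}_f\neq\overline{\mcA}_g$ and the expansions disagree at some tuple $\bar a\in A$ of size less than $k$ on some $R_i$; without loss of generality $\mcM\models\varphi_i(\iota_B(f(\bar a)))\wedge\neg\varphi_i(\iota_C(g(\bar a)))$.

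I would then extract witnesses $\bar y_0\subseteq M$ for $\psi_i(\iota_B(f(\bar a)),\bar y_0)$ and $\bar z_0\subseteq M$ for $\chi_i(\iota_C(g(\bar a)),\bar z_0)$, and set $\mcA_0:=\mcA\reduct\bar a$, $\mcB_0:=\mcM\reduct(\iota_B(B)\cup\bar y_0)$ viewed as an extension of $\mcB$ via $\iota_B$, and $\mcC_0:=\mcM\reduct(\iota_C(C)\cup\bar z_0)$ viewed as an extension of $\mcC$ via $\iota_C$; these satisfy the size constraints of SEAP by construction of $k$ and $m$. To check that $f_0:=f\reduct A_0$ and $g_0:=g\reduct A_0$ still admit no amalgam, any amalgam $\mcD\in Age(\mcM)$ would satisfy $\varphi_i$ on the common image $\bar d$ of $\bar a$ by upward preservation of $\Sigma_1$-formulas along $\mcB_0\subseteq\mcD$ (via the witnesses $\bar y_0$), and symmetrically $\exists\bar z\,\chi_i$ along $\mcC_0\subseteq\mcD$; embedding $\mcD$ back into $\mcM$ would then force both $\varphi_i(\bar d)$ and $\neg\varphi_i(\bar d)$ in $\mcM$, a contradiction.

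The main obstacle is the bookkeeping in the second and third paragraphs: distinguishing the abstract structures $\mcA,\mcB,\mcC$ from their various embeddings into $\mcM$, tracking the resulting $V'$-expansions, and invoking AP of $\overline{\mbK}$ at the right moment to extract a bounded-arity mismatch. Once the mismatch tuple is located, the $\Sigma_1$-witness extensions and the final preservation-based contradiction are essentially mechanical.
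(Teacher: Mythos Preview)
Your proof is correct and follows essentially the same route as the paper's: replace the homogenizing formulas by $\Sigma_1$-equivalents, locate a subtuple of bounded size on which a homogenizing formula disagrees across the two embeddings, and enlarge $\mcB$ and $\mcC$ by the relevant existential witnesses to block any amalgam. The only cosmetic differences are that you invoke AP of $Age(\overline{\mcM})$ to find the disagreement (the paper argues directly that $tp^{\mcM}(f(A))\neq tp^{\mcM}(g(A))$), and you explicitly add the $\Sigma_1$-equivalents of each $\neg\varphi_i$ to the homogenizing family, which is exactly what the paper's ``we may assume'' mutual exclusivity of $\varphi$ and $\psi$ amounts to.
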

\begin{proof}
Model-completeness is equivalent to the condition that each formula is equivalent to a $\Sigma_1-$formula, thus we may assume that the homogenizing formulas are $\Sigma_1-$formulas. The type of a tuple may then be isolated by a conjunction of homogenizing formulas and quantifier free formulas, since the structure is homogenizable. \\\indent 
In order to prove that $Age(\mcM)$ satisfies SEAP assume that $Age(\mcM)$ does not satisfy AP. We will show that SEAP$_{k,m}$ is satisfied where $k$ is the maximum among the number of free variables among homogenizing formulas and $m$ is the maximum among the number of bound variables among the homogenizing formulas. Assume $\mcA,\mcB,\mcC\subseteq \mcM$ with embeddings $f:\mcA\rightarrow \mcB$, $g:\mcA\rightarrow \mcC$ be without an amalgam. We conclude that $tp^\mcM(f(A))\neq tp^\mcM(g(A))$, however since they have the same atomic diagram there have to exist homogenizing formulas $\exists \bar y\varphi(\bar x,\bar y),\exists \bar y \psi(\bar x,\bar y)$, where $\varphi$ and $\psi$ are quantifier free, such that for some $\bar a_0\in A$, $\mcM\models \exists \bar y\varphi (f(\bar a_0),\bar y)\wedge \neg \exists \bar y\varphi(g(\bar a_0),\bar y) \wedge \neg \exists \bar y \psi(f(\bar a_0),\bar y)\wedge \exists \bar y \psi(g(\bar a_0),\bar y)$. Note that we may assume
\begin{equation}\label{eq1}\mcM\models \forall \bar x \Big(\big(\exists \bar y\varphi(\bar x, \bar y) \rightarrow \neg \exists \bar y \psi(\bar x,\bar y)\big)\wedge \big (\exists \bar y\psi(\bar x, \bar y) \rightarrow \neg \exists \bar y \varphi(\bar x,\bar y) \big)\Big ).
\end{equation}
Let $\mcA_0 = \mcA\reduct {\bar a_0}$, let $\mcB_0$ be $\mcB$ extended with a tuple witnessing the existential quantifier in $\exists \bar y\varphi(f(\bar a_0),\bar y)$ and let $\mcC_0$ be $\mcC$ extended with a tuple witnessing the existential quantifier in $\exists \bar y\varphi(g(\bar a_0),\bar y)$. Then $f_0: \mcA_0\rightarrow \mcB_0$, $f_0 = f\reduct {A_0}$ and $g_0:\mcA_0\rightarrow \mcC_0$, $g_0 = g\reduct {A_0}$ can not have an amalgam since (\ref{eq1}) hold and $\mcM\models \exists \bar y\varphi(f_0(\bar a_0),\bar y) \wedge \exists \bar y \psi(g_0(\bar a_0),\bar y)$.
\end{proof}
Combining the previous two lemmas we now have a proof for Theorem \ref{mcmainthm}.
In \cite{C} Covington asks whether all homogenizable classes have a homogenizable model companion. In the notation of \cite{C}, the previous theorem implies that we can find a homogenizable class without a homogenizable model companion if and only if there is a homogenizable class not satisfying SEAP. The author does not know whether such a class exists and hence the question remains open. 

\section{Boundedly homogenizable structures}\label{symsec}
In this section we characterize the boundedly homogenizable structures. We try to find out whether all model-complete homogenizable structures are boundedly homogenizable, but only find that this is the case for homogenizable structures with certain model theoretic properties. The following proposition give us a good understanding of the basic properties of boundedly homogenizable structures.
\begin{prp}\label{symbasicprp}
If $\mcM$ is a homogenizable countably infinite structure then the following are equivalent. 
\begin{enumerate}[label=(\roman*)]
\item $\mcM$ is a boundedly homogenizable structure.
\item For each finite $\mcA\subseteq \mcM$ there is a finite $\mcB$ with $\mcA\subseteq\mcB\subseteq\mcM$ such that each embedding $f:\mcB\rightarrow \mcM$ may be extended to an automorphism.
\item $\mcM$ is model-complete and for each $\mcA\subseteq \mcM$ there is an amalgamation base $\mcB$ for $Age(\mcM)$ such that $\mcA\subseteq\mcB\subseteq\mcM$.
\end{enumerate}
\end{prp}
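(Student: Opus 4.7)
My plan is to prove the chain $(1) \Leftrightarrow (2)$ and $(2) \Rightarrow (3) \Rightarrow (1)$, leveraging the fact that a homogenizable structure over a finite relational vocabulary is $\omega$-categorical and hence $\omega$-saturated, so Lemma \ref{qrfreelma} applies.

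For $(1) \Leftrightarrow (2)$, I would work through Lemma \ref{qrfreelma} directly. Given $\bar a \in M$, condition (1) produces $\bar b$ such that the type of $\bar a\bar b$ is isolated by a quantifier-free formula; by the lemma, every embedding $\mcM\reduct \bar a\bar b \to \mcM$ extends to an automorphism, yielding (2). Conversely, starting from $\mcA = \mcM\reduct \bar a$, taking the $\mcB \supseteq \mcA$ from (2) and enumerating $\mcB \setminus \mcA$ as $\bar b$, the same lemma gives a quantifier-free isolation of $tp(\bar a\bar b)$. This step is routine.

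For $(2) \Rightarrow (3)$, I would split into two tasks. For the amalgamation base: given $f:\mcB \to \mcB'$ and $g:\mcB \to \mcC'$ with $\mcB',\mcC' \in Age(\mcM)$, embed $\mcB'$ and $\mcC'$ into $\mcM$ via $\iota_1,\iota_2$; then $\iota_1\circ f$ and $\iota_2\circ g$ are embeddings $\mcB \to \mcM$, and by (2) each extends to an automorphism $\alpha,\beta$ of $\mcM$. The substructure $\mcD$ on $\alpha^{-1}(\iota_1(B')) \cup \beta^{-1}(\iota_2(C'))$ together with the embeddings $\alpha^{-1}\circ\iota_1$ and $\beta^{-1}\circ\iota_2$ forms the required amalgam, since $\alpha^{-1}\circ\iota_1\circ f = \mathrm{id}_B = \beta^{-1}\circ\iota_2\circ g$ on $\mcB$. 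For model-completeness, by the characterization in an $\omega$-categorical structure it suffices to show every self-embedding $h:\mcM\to\mcM$ is elementary. Given $\bar a \in M$, let $\mcA = \mcM\reduct \bar a$ and pick $\mcB \supseteq \mcA$ from (2); then $h\uhr B:\mcB\to\mcM$ extends to an automorphism of $\mcM$, so in particular $tp^\mcM(\bar a) = tp^\mcM(h(\bar a))$. Since this holds for every $\bar a$, $h$ is elementary and $\mcM$ is model-complete. This model-completeness argument is, I expect, the only delicate step, and it hinges on the fact that locally promoting embeddings to automorphisms is enough to force elementarity.

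For $(3) \Rightarrow (1)$, this is the cleanest direction: given $\bar a \in M$ and $\mcA = \mcM\reduct \bar a$, condition (3) yields an amalgamation base $\mcB$ for $Age(\mcM)$ with $\mcA \subseteq \mcB \subseteq \mcM$. Since $\mcM$ is model-complete and $\omega$-categorical, Lemma \ref{amtofree} applies to any enumeration $\bar a\bar b$ of $\mcB$, giving a quantifier-free isolating formula for $tp(\bar a\bar b)$. This verifies the defining condition of boundedly homogenizable. With the three implications in hand, the three conditions are equivalent.
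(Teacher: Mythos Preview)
Your proof is correct. The equivalence $(1)\Leftrightarrow(2)$ and the implication $(3)\Rightarrow(1)$ match the paper exactly (via Lemma~\ref{qrfreelma} and Lemma~\ref{amtofree}). The difference is in the direction towards $(3)$: the paper goes $(1)\Rightarrow(3)$ by invoking Lemma~\ref{symtosigma} for model-completeness (producing explicit $\Sigma_1$ isolating formulas from the quantifier-free isolations) and Lemma~\ref{prpamfree} for the amalgamation base. You instead go $(2)\Rightarrow(3)$ directly: your amalgamation argument pulls back $\mcB'$ and $\mcC'$ along automorphisms extending the two embeddings of $\mcB$, which is essentially the proof of Lemma~\ref{prpamfree} rewritten in automorphism language; your model-completeness argument is genuinely different, using the $\omega$-categorical reduction to self-embeddings and then observing that $(2)$ forces each self-embedding to agree with an automorphism on every finite tuple. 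Your route is more self-contained for this proposition and avoids the $\Sigma_1$-homogenization machinery; the paper's route via Lemma~\ref{symtosigma} is less direct here but yields the stronger conclusion that $\mcM$ is $\Sigma_1$-homogenizable, which is needed elsewhere.
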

\begin{proof}
$(i)$ and $(ii)$ are equivalent by Lemma \ref{qrfreelma} and the definition of being boundedly homogenizable. 
We prove $(iii)$ implies $(i)$ by Lemma \ref{amtofree} and to show that
$(i)$ implies $(iii)$ we use Lemma \ref{symtosigma} to get model-completeness and Lemma \ref{prpamfree} to get the amalgamation bases.
\end{proof}
As model-completeness is a very important property for homogenizable structures it is interesting to see that all boundedly homogenizable structures are model-complete.
\begin{lma}\label{symtosigma}
If a structure $\mcM$ is boundedly homogenizable then it is $\Sigma_1-$ ho\-mo\-geni\-zable and $Th(\mcM)$ is model-complete.
\end{lma}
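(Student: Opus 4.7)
The plan is to show the following chain: bounded homogenizability forces every complete type over $\emptyset$ (of every arity) to be isolated by a $\Sigma_1$-formula, and then $\omega$-categoricity (which we get for free since $\mcM$ is homogenizable over a finite relational vocabulary) lets us upgrade this type-level information to the formula-level statements that give both $\Sigma_1$-homogenizability and model-completeness.

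First I would carry out the type-level step. Fix any $\bar a \in M$. By bounded homogenizability there exists $\bar b$ such that $tp(\bar a\bar b)$ is isolated by a quantifier-free formula $\chi(\bar x,\bar y)$. Then $\exists \bar y\, \chi(\bar x,\bar y)$ is a $\Sigma_1$-formula that $\bar a$ satisfies, and any $\bar a'$ satisfying it has an extension realising $\chi$, hence (since $\chi$ isolates $tp(\bar a\bar b)$) the same complete type as $\bar a\bar b$, and in particular $tp(\bar a')=tp(\bar a)$. Thus every complete type over $\emptyset$ in $\mcM$ is isolated by a $\Sigma_1$-formula.

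Next I would deduce $\Sigma_1$-homogenizability. Let $\varphi_1,\ldots,\varphi_n$ be some choice of homogenizing formulas. Each $\varphi_i$ defines a $\emptyset$-definable relation in $\mcM$, which is therefore a finite union of complete types over $\emptyset$ (finitely many exist of each arity by $\omega$-categoricity). By the previous paragraph each of these types is isolated by a $\Sigma_1$-formula, so $\varphi_i$ is equivalent in $\mcM$ to the corresponding finite disjunction of $\Sigma_1$-formulas, itself a $\Sigma_1$-formula $\varphi_i'$. Replacing each $\varphi_i$ by $\varphi_i'$ gives a $\Sigma_1$ set of homogenizing formulas defining exactly the same added relations, so the homogeneous expansion witnessing homogenizability also witnesses $\Sigma_1$-homogenizability.

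Finally I would deduce model-completeness. By the cited fact (Hodges 8.3.1) it suffices to show that every formula is equivalent modulo $Th(\mcM)$ to a $\Sigma_1$-formula. Given any formula $\varphi(\bar x)$, $\omega$-categoricity says there are only finitely many complete types $p_1(\bar x),\ldots,p_m(\bar x)$ over $\emptyset$, each isolated by a $\Sigma_1$-formula $\theta_j(\bar x)$ from the type-level step. Then $\varphi$ is equivalent in $\mcM$ to $\bigvee_{j:\,\varphi\in p_j}\theta_j(\bar x)$, a finite disjunction of $\Sigma_1$-formulas, hence $\Sigma_1$. Since $\mcM$ is $\omega$-categorical, equivalence in $\mcM$ is equivalence modulo $Th(\mcM)$, which yields model-completeness. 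The only real content is the first step; everything else is bookkeeping around $\omega$-categoricity, and I do not anticipate a genuine obstacle.
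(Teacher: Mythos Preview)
Your proposal is correct and follows essentially the same line as the paper's proof: the key step in both is that if $tp(\bar a\bar b)$ is isolated by a quantifier-free $\chi(\bar x,\bar y)$, then $\exists\bar y\,\chi(\bar x,\bar y)$ isolates $tp(\bar a)$, whence every type is $\Sigma_1$-isolated and the two conclusions follow from $\omega$-categoricity. The only cosmetic differences are that the paper restricts attention to arities up to the maximal arity $r$ of the homogenizing formulas and adds the resulting $\Sigma_1$-isolators as new relations (a refinement of the original homogenization), whereas you replace each original homogenizing formula by an equivalent $\Sigma_1$-formula; also, your remark that $\omega$-categoricity is needed to pass from equivalence in $\mcM$ to equivalence modulo $Th(\mcM)$ is unnecessary, since $Th(\mcM)$ is complete.
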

\begin{proof}
Among the formulas which homogenize $\mcM$, assume that the largest number of free variables is $r$. Let $\bar a_1,\ldots,\bar a_n$ be realizations of all the different types on $1,\ldots,r-$tuples in $\mcM$. For each $i=1,\ldots,n$ let $\bar b_i\in M$ be such that $tp(\bar a_i \bar b_i)$ is isolated by a quantifier free formula and let $\chi_i$ be the atomic diagram of $\bar a_i\bar b_i$. It is clear that $\exists \bar x \chi_{i}(\bar y,\bar x)$ isolates $tp(\bar a_i)$. For each $i\in \{1,\ldots, n\}$, adding a relation symbol $R_i$ representing the formula $\exists \bar x\chi_i(\bar y, \bar x)$ will hence be a refinement of the homogenization, since it implies the old homogenizing formula. Hence this new homogenization is of the form $\Sigma_1$ and all types are isolated by a conjunction of $\Sigma_1-$formulas, thus the theory is model-complete.
\end{proof}
\noindent We now have the tools needed in order to prove Theorem \ref{symmainthm}.

\begin{proof}[Proof Theorem \ref{symmainthm}]
The existence of such a structure is clear since the \FR limit is homogeneous and hence boundedly homogenizable.
If $\mcM$ is boundedly homogenizable and $Age(\mcM)=\mbK$ Lemma \ref{symtosigma} then implies that $\mcM$ is model-complete, but Saracino \cite{S} has shown that there is always a unique model-complete countably infinite structure such that $Age(\mcM)=\mbK$, hence $\mcM$ must be this structure. Since every homogeneous structure is model-complete, $\mcM$ must be isomorphic to the \FR limit of $\mbK$.
\end{proof}
If $\mcM$ is not homogeneous yet homogenizable with an age satisfying the amalgamation property we have two choices for our favorite related model-complete structure. One choice is the Fra\"{i}ss\'{e} limit, which coincides with the model companion, however the second choice is the homogeneous structure which is gotten when adding new relational symbols. These two structures are not the same and do not even need to be reducts of one another. \\\indent The converse of Lemma \ref{symtosigma} can be formulated in the following question, to which the author does not know the answer.
\begin{quest} Does there exist a model-complete homogenizable structure which is not boundedly homogenizable?
\end{quest}
\section{Uniformly homogenizable structures}\label{unisec}
The uniformly homogenizable structure have some tuple (or tuples) which determines the types of all other tuples in the structure. This notion makes us believe that if we have a $\Sigma_1-$homogenizable structure then it should be possible to witness the existential quantifiers of the homogenizing formulas for all tuples with a single uniform tuple.
\begin{defi}
A $\Sigma_1-$homogenizable structure $\mcM$ with homogenizing formulas $\exists \bar x\varphi_i(\bar y,\bar x)$ for $i=1,\ldots,n$ ($\varphi_i$ is quantifier free) has \textbf{uniformly homogenizing formulas} if for each $i = 1,\ldots,n$ 
\[\mcM\models \exists \bar x \forall \bar y \big(\exists \bar x_0 \varphi_i(\bar y,\bar x_0) \rightarrow \varphi_i(\bar y,\bar x) \big)\]
\end{defi}
We will prove that having uniformly homogenizing formulas is equivalent with being uniformly homogenizable, among some other characterizing properties in the spirit of Proposition \ref{symbasicprp}.
\begin{prp}\label{uniprp} If $\mcM$ is a homogenizable countably infinite structure then the following are equivalent:
\begin{enumerate}[label=(\roman*)]
\item$\mcM$ is uniformly homogenizable.
\item $\mcM$ has uniformly homogenizing formulas.
\item There is a finite structure $\mcN\subseteq\mcM$ such that for each finite structure $\mcA$ such that $\mcN\subseteq \mcA\subseteq\mcM$ and embedding $f:\mcA\rightarrow\mcM$, $f$ may be extended into an automorphism.
\item $\mcM$ is model-complete and there exists a finite structure $\mcN\subseteq \mcM$ such that each finite $\mcA\subseteq \mcM$ such that $\mcN$ is embeddable in $\mcA$ is an amalgamation base.
\end{enumerate}
\end{prp}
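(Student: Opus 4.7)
The plan is to establish the four-way equivalence by first noting that (1) immediately implies boundedly homogenizable (take $\bar b$ to be the fixed uniform tuple), so Lemma \ref{symtosigma} gives $\Sigma_1$-homogenizability and model-completeness for free whenever (1) holds. I would then prove (1) $\Leftrightarrow$ (3) directly from Lemma \ref{qrfreelma}, close the cycle with (1) $\Leftrightarrow$ (2), and finally derive (1) $\Leftrightarrow$ (4) via Lemmas \ref{prpamfree} and \ref{amtofree}.

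For (1) $\Leftrightarrow$ (3), fix the uniform tuple $\bar a$ from (1) and set $\mcN = \mcM \reduct \bar a$. Any finite $\mcA$ with $\mcN \subseteq \mcA \subseteq \mcM$ is of the form $\mcM \reduct \bar a\bar b$, and since a homogenizable structure over a finite vocabulary is $\omega$-categorical hence saturated, Lemma \ref{qrfreelma} tells us that every embedding of $\mcA$ into $\mcM$ extends to an automorphism precisely when $tp(\bar a\bar b)$ is isolated by a quantifier-free formula. Thus (1) and (3) are essentially restatements of each other through this lemma.

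For (1) $\Rightarrow$ (2) I would construct new homogenizing formulas as follows. By $\omega$-categoricity there are only finitely many complete types $p_1(\bar y),\ldots,p_n(\bar y)$ (in each arity up to the maximum needed); for each $p_i$ pick a realization $\bar b_i$ and let $\chi_i(\bar u,\bar y)$ be the quantifier-free formula isolating $tp(\bar a\bar b_i)$, which exists by (1). The new relations $R_i(\bar y) \equiv \exists \bar u\, \chi_i(\bar u,\bar y)$ still homogenize $\mcM$, since any tuple $\bar b$ realizing $p_i$ satisfies $\mcM \models \chi_i(\bar a,\bar b)$, so $\bar a$ itself is a uniform witness for every $R_i$. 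Conversely, for (2) $\Rightarrow$ (1), concatenate the uniform witnesses $\bar c_1,\ldots,\bar c_n$ into one tuple $\bar a$; then for any $\bar b$ every existential quantifier of a homogenizing formula applied to a subtuple of $\bar a\bar b$ is already witnessed inside $\bar a\bar b$, so $tp(\bar a\bar b)$ is isolated by its atomic diagram.

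Finally, (1) $\Leftrightarrow$ (4) combines the already-derived model-completeness with Lemma \ref{prpamfree} (forward: every $\mcA\supseteq \mcM\reduct \bar a$ inherits q.f. type isolation, hence is an amalgamation base) and Lemma \ref{amtofree} (reverse: being an amalgamation base together with model-completeness forces the type of every tuple containing $\bar a$ to be q.f.-isolated). The main obstacle I expect is the direction (1) $\Rightarrow$ (2): the subtle point is verifying that the replacement $R_i$'s genuinely form a homogenization, which reduces to showing that two tuples agreeing on the new atomic diagram have the same complete type in $\mcM$. The cleanest way to handle this is to observe that once $\bar a$ is named, the $R_i$'s isolate the types over $\bar a$, and the ambient $\Sigma_1$-homogenizability transported through $\bar a$ as uniform witness then gives full isolation of types in the expanded vocabulary.
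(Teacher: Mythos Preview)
Your overall architecture matches the paper exactly: (1)$\Leftrightarrow$(3) via Lemma~\ref{qrfreelma}, (1)$\Leftrightarrow$(4) via Lemmas~\ref{prpamfree}, \ref{amtofree} and \ref{symtosigma}, and (1)$\Leftrightarrow$(2) handled separately. The difference, and the problem, lies entirely in your treatment of (1)$\Leftrightarrow$(2); both directions as you sketch them have genuine gaps.

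For (1)$\Rightarrow$(2): you define $R_i(\bar y)\equiv\exists\bar u\,\chi_i(\bar u,\bar y)$ with $\chi_i$ the atomic diagram of a \emph{single} pair $\bar a\bar b_i$, and claim $\bar a$ is a uniform witness. This fails: if $\bar b$ realizes $p_i$ then $tp(\bar b)=tp(\bar b_i)$, but that does not force $tp(\bar a\bar b)=tp(\bar a\bar b_i)$, so $\chi_i(\bar a,\bar b)$ need not hold. The paper's Lemma~\ref{unilma2} repairs this by enumerating \emph{all} isomorphism types $\mcB^k_1,\ldots,\mcB^k_m$ of $\mcM\reduct\bar a\bar b$ as $\bar b$ ranges over realizations of a fixed type, and taking the disjunction $\bigvee_i\exists\bar x\,\chi_{k,i}(\bar y,\bar x)$; only then does $\bar a$ serve as a uniform witness.

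For (2)$\Rightarrow$(1): you argue that every homogenizing existential on a subtuple of $\bar a\bar b$ is witnessed inside $\bar a\bar b$, and conclude $tp(\bar a\bar b)$ is isolated by its atomic diagram. The first claim is correct and gives one implication: if a subtuple $\bar e$ of $\bar a\bar b$ satisfies $\exists\bar x\,\varphi_i$, then the corresponding subtuple $\bar e'$ of any $\bar a'\bar b'$ with the same atomic diagram does too. But the converse fails from your argument alone: if $\bar e'$ satisfies $\exists\bar x\,\varphi_i$, the uniform witness $\bar c_i$ lives in $\bar a$, not in $\bar a'$, so you cannot read off $\neg\varphi_i$ on the $\bar a'\bar b'$ side from the atomic diagram. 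The paper's Lemma~\ref{unilma1} handles this asymmetry by first treating tuples $\bar b'$ that are \emph{maximal} in the number of subtuples satisfying homogenizing formulas (where the argument is symmetric), and then descending by induction on that count.
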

\begin{proof}
$(i)$ is equivalent to $(ii)$ is shown in Lemmas \ref{unilma1} and \ref{unilma2}. To show that $(i)$ is equivalent to $(iii)$ we use Lemma \ref{qrfreelma}. The uniformly homogenizable structures are boundedly homogenizable so $(i)$ implies $(iv)$ follows from Lemma \ref{symtosigma} and Lemma \ref{prpamfree}. By Lemma \ref{amtofree} the converse follows.
\end{proof}
We prove that a structure having uniformly homogenizing formulas implies that the structure is uniformly homogenizable by collecting the uniform witnesses for the homogenizing formulas together, and then show that these actually form a tuple whose type, and its extensions, are isolated by quantifier free formulas. 
\begin{lma}\label{unilma1} If $\mcM$ is homogenizable with uniformly homogenizing formulas then $\mcM$ is uniformly homogenizable.
\end{lma}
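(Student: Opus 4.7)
The plan is to produce the uniform tuple $\bar a$ by concatenating the uniform witnesses supplied by the assumption, and then verify that the quantifier-free $V$-diagram of $\bar a\bar b$ isolates $tp^{\mcM}(\bar a\bar b)$ for every $\bar b$.

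More explicitly, let the homogenizing formulas be $\exists\bar x_1\varphi_1(\bar y_1,\bar x_1),\ldots,\exists\bar x_n\varphi_n(\bar y_n,\bar x_n)$ with each $\varphi_i$ quantifier free, and let $R_1,\ldots,R_n$ be the corresponding relation symbols in the expanded vocabulary $V'$, with $\mcN$ the homogeneous $V'$-expansion of $\mcM$. For each $i$, the assumption of uniform homogenizing formulas yields a tuple $\bar c_i\in M$ such that
\[\mcM\models \forall\bar y_i\bigl(\exists\bar x_0\,\varphi_i(\bar y_i,\bar x_0)\rightarrow\varphi_i(\bar y_i,\bar c_i)\bigr).\]
Set $\bar a=\bar c_1\bar c_2\cdots\bar c_n$, which I claim witnesses uniform homogenizability.

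Now fix any $\bar b\in M$ and let $\chi(\bar y,\bar z)$ be the quantifier-free $V$-atomic diagram of $\bar a\bar b$. The key observation is that $\chi$ already determines the full $V'$-atomic diagram of $\bar a\bar b$: for every subtuple $\bar d$ of $\bar a\bar b$ of the right arity, the relation $R_i(\bar d)$ in $\mcN$ is equivalent to $\exists\bar x_0\,\varphi_i(\bar d,\bar x_0)$ in $\mcM$, which by the uniformity clause above is equivalent to $\varphi_i(\bar d,\bar c_i)$; and since $\bar c_i$ is a subtuple of $\bar a$, this last formula is quantifier free in the parameters $\bar a\bar b$ and hence its truth value is read off from $\chi$. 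Therefore any tuple $\bar a'\bar b'\in M$ satisfying $\chi$ realizes the same $V'$-atomic diagram as $\bar a\bar b$ in $\mcN$. Homogeneity of $\mcN$ then gives a $V'$-automorphism of $\mcN$ sending $\bar a\bar b$ to $\bar a'\bar b'$, whose reduct is a $V$-automorphism of $\mcM$; in particular $tp^{\mcM}(\bar a\bar b)=tp^{\mcM}(\bar a'\bar b')$, so $\chi$ isolates $tp^{\mcM}(\bar a\bar b)$. As $\bar b$ was arbitrary, $\mcM$ is uniformly homogenizable.

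The only delicate point — and where I would concentrate care in the write-up — is the bookkeeping in the step that converts each $R_i(\bar d)$ into a quantifier-free statement about $\bar a\bar b$. One has to note that $\bar d$ may straddle $\bar a$ and $\bar b$, but this causes no trouble because $\bar c_i\subseteq\bar a\subseteq\bar a\bar b$, so $\varphi_i(\bar d,\bar c_i)$ is quantifier free in variables all occurring in $\bar a\bar b$ and thus is a conjunct (or its negation) of the diagram $\chi$. Apart from this verification, the argument is a direct translation of the uniformity hypothesis into the amalgamated tuple $\bar a$.
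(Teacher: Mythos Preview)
Your construction of $\bar a=\bar c_1\cdots\bar c_n$ is the right one, and the direction you do prove is correct: if $R_i(\bar d)$ holds then $\varphi_i(\bar d,\bar c_i)$ holds by uniformity of $\bar c_i$, hence the corresponding $\varphi_i(\bar d',\bar c_i')$ holds by $\chi$, hence $R_i(\bar d')$ holds. The problem is the converse. Suppose $\bar a'\bar b'$ satisfies $\chi$ and $\bar d'$ is a subtuple with $R_i(\bar d')$, i.e.\ $\mcM\models\exists\bar x_0\,\varphi_i(\bar d',\bar x_0)$. You want to conclude $\varphi_i(\bar d',\bar c_i')$, but this would require $\bar c_i'$ to be a uniform witness for $\varphi_i$ as well, and nothing in the quantifier-free $V$-diagram $\chi$ records that property. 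Being a uniform witness is not in general a quantifier-free condition, so there may be $\bar a'$ with the same atomic $V$-diagram as $\bar a$ whose corresponding subtuples $\bar c_i'$ fail to witness some instances of $\exists\bar x_0\,\varphi_i$. In that case $\bar a'\bar b'$ can have strictly more $R_i$'s holding than $\bar a\bar b$, and your ``therefore'' breaks down.

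What you have actually shown is the one-sided implication: any $\bar a'\bar b'$ with the same $V$-diagram as $\bar a\bar b$ has at least as many subtuples satisfying each homogenizing formula. The paper exploits exactly this asymmetry. It first treats tuples $\bar a\bar b'$ where $\bar b'$ is chosen, among tuples with a fixed atomic $V$-diagram, to maximise the number of subtuples satisfying the $\exists\bar x\,\varphi_i$; for those the one-sided implication forces equality of $V'$-diagrams, so the type is quantifier-free isolated. It then runs a downward induction on the number of satisfied homogenizing formulas: if $\bar a\bar b$ is not maximal, any $\bar a'\bar b'$ with the same $V$-diagram either has strictly more (hence a type already handled and therefore distinguishable by its own quantifier-free diagram) or the same count (and then your argument applies). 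You need this extra maximality-plus-induction layer; without it the step from ``same $V$-diagram'' to ``same $V'$-diagram'' is not justified.
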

\begin{proof}
Assume that $\exists \bar x\varphi_1(\bar y,\bar x),\ldots,\exists \bar x\varphi_n(\bar y,\bar x)$ are the $\Sigma_1-$homogenizing formulas. Since we assume that these formulas are uniformly homogenizing there exist tuples $\bar a_1,\ldots,\bar a_n$ such that for each $i=1,\ldots,n$
\begin{equation}\label{eqlma43}
\mcM\models \forall \bar y \big(\exists \bar x_0 \varphi_i(\bar y,\bar x_0) \rightarrow \varphi_i(\bar y,\bar a_i) \big).
\end{equation}
We will now show that $\bar a = \bar a_1\ldots\bar a_n$ is a tuple witnessing that $\mcM$ is uniformly homogenizable. 
For any $\bar b\in M$, we will show that $tp(\bar a\bar b)$ is isolated by a quantifier free formula we will do downwards induction on the number of subtuples of $\bar a\bar b$ which satisfy some homogenizing formulas.\\\indent
As a base case of the induction assume that for any $\bar b'\in M$ such that $\mcM\reduct \bar b \cong \mcM\reduct \bar b'$, $\bar b$ has the highest number of subtuples satisfying the formulas in $\{\exists \bar x\varphi_i(\bar y,\bar x)\}_{i\in [n]}$.
As equation (\ref{eqlma43}) hold for the subtuples of $\bar a$, for each tuple $\bar c\bar d$ such that there is an isomorphism $f:\mcM\reduct \bar a\bar b \rightarrow \mcM\reduct \bar c\bar d$ and for any subtuple $\bar e_0$ of $\bar a\bar b$ and $i=1,\ldots,n$ if $\mcM\models \exists \bar x\varphi_i(\bar e_0, \bar x)$ then $\mcM\models\exists \bar x\varphi_i(f(\bar e_0),\bar x)$. However the maximality of $\bar b$ proves that this implication is an equivalence, thus $\mcM\models \exists \bar x\varphi_i(\bar e_0, \bar x)$ iff $\mcM\models\exists \bar x\varphi_i(f(\bar e_0),\bar x)$. As $\bar a \bar b$ and $\bar c \bar d$ satisfy the same atomic diagram and homogenizing formulas on respective subtuples, it is clear that $tp(\bar a \bar b) = tp(\bar c \bar d)$ hence the type is isolated by its atomic diagram.\\\indent 
As induction hypothesis we have that for each tuple $\bar c_0\bar d_0$ such that there is an isomorphism $f:\mcM\reduct \bar a\bar b\rightarrow \mcM\reduct \bar c_0\bar d_0$, if $\bar c_0\bar d_0$ has more subtuples satisfying $\exists x \varphi_i(\bar y,\bar x)$ for $i=1,\ldots,n$ then it has quantifier free isolation and hence we can not have an isomorphism to $\mcM\reduct \bar a\bar b$, as $tp(\bar a\bar b)\neq tp(\bar c_0\bar d_0)$. If on the other hand $\bar c_0\bar d_0$ have the same amount of subtuples satisfying homogenizing formulas, the same reasoning as previously in this proof (when we had maximal amount of subtuples) implies that $tp(\bar c_0\bar d_0)=tp(\bar a\bar b)$. Since $\bar c_0\bar d_0$ is an arbitrary tuple with the same atomic diagram as $\bar a\bar b$ we conclude that the type $tp(\bar a\bar b)$ is isolated by a quantifier free formula.
\end{proof}
To get the converse of the previous lemma we may need to change the homogenizing formulas so that they depend on tuples inducing types isolated by quantifier free formulas, and then show that the newly created formulas are uniformly homogenizing formulas.
\begin{lma}\label{unilma2} If $\mcM$ is a uniformly homogenizable structure, then $\mcM$ may be homogenized using only uniformly homogenizing formulas.
\end{lma}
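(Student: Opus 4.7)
The plan is to take the existing $\Sigma_1-$homogenizing formulas of $\mcM$ (which exist by Lemma \ref{symtosigma}, since uniformly homogenizable implies boundedly homogenizable) and replace them by new ones for which the uniform tuple $\bar a$ from the definition of uniform homogenizability is itself a universal witness. The crucial trick is to use a disjunction of atomic diagrams of tuples of the form $\bar b\bar a$, so that no matter how $\bar a$ interacts with a given $\bar y$ of a fixed type $p$, it matches at least one disjunct.

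Let $\bar a\in M$ be the uniform tuple, so $tp(\bar b\bar a)$ is isolated by a quantifier-free formula for every $\bar b\in M$, and let $r$ be the maximum arity among the original homogenizing formulas. By $\omega-$categoricity, for each complete type $p(\bar y)$ on $r'\leq r$ variables the set of complete types $q(\bar y,\bar z)$ realized as $tp(\bar b\bar a)$ for some $\bar b$ with $tp(\bar b)=p$ is finite; enumerate them $q_{p,1},\ldots,q_{p,m_p}$ and let $\chi_{p,k}(\bar y,\bar z)$ be the atomic diagram of a representative of $q_{p,k}$, which is quantifier-free and isolates $q_{p,k}$. For each such $p$, introduce the new homogenizing formula $\exists\bar z\,\psi_p(\bar y,\bar z)$, where $\psi_p(\bar y,\bar z):=\bigvee_{k=1}^{m_p}\chi_{p,k}(\bar y,\bar z)$, and let $R_p$ be its corresponding relation symbol.

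Two things then need verification. First, these new formulas are uniformly homogenizing with witness $\bar a$: if $\exists\bar z_0\,\psi_p(\bar y_0,\bar z_0)$ holds, then some disjunct $\chi_{p,k}(\bar y_0,\bar z_0)$ forces $tp(\bar y_0)=p$, and consequently $tp(\bar y_0\bar a)$ equals one of the listed $q_{p,k'}$ by construction, so $\chi_{p,k'}(\bar y_0,\bar a)$ and hence $\psi_p(\bar y_0,\bar a)$ holds. Second, these relations homogenize $\mcM$: each $R_p$ is interpreted as the type-indicator ``$tp(\bar y)=p$'', so each original homogenizing relation $R_i$ can be expressed as a disjunction of those $R_p$ whose type $p$ satisfies $\exists \bar x\,\varphi_i$, and therefore the new expansion refines the original homogeneous expansion and is itself homogeneous.

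The main obstacle, which the disjunction resolves, is that a single atomic diagram $\chi_{p,k}$ together with the existential witness $\bar a$ would cover only those $\bar y_0$ whose interaction with $\bar a$ realizes the specific type $q_{p,k}$, a strict subset of the tuples of type $p$ whenever $m_p>1$. Taking the disjunction over all the $q_{p,k}$ yields the full type-indicator for $p$ while preserving $\bar a$ as the universal witness, since $tp(\bar y_0\bar a)$ must fall into the enumerated list.
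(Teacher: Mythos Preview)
Your proof is correct and follows essentially the same approach as the paper: for each complete type $p$ on at most $r$ variables, you form the disjunction of the atomic diagrams of $\bar b\bar a$ as $\bar b$ ranges over realizations of $p$, and observe that the resulting $\Sigma_1$-formulas are type-indicators with uniform witness $\bar a$, hence refine the original homogenization. Your verification steps are somewhat more explicit than the paper's, but the construction and the key idea (the disjunction over the finitely many possible interactions of $\bar b$ with $\bar a$) are identical.
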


\begin{proof}
Let $\bar a \in M$ be such that for each $\bar b \in M$, $tp(\bar a\bar b)$ is isolated by a quantifier free formula. Assume that the highest arity among homogenizing formulas is $r$.
For any $k\in [r]$, let $\bar b_1,\ldots,\bar b_m \in M$ be an as large set as possible of $k-$tuples such that $tp(\bar b_1) = \ldots=tp(\bar b_m)$ yet if $\mcB^k_i = \mcM\reduct\bar a\bar b_i$ then $\mcB^k_{i_0}\not\cong\mcB^k_{i_1}$ for any distinct $i_0,i_1 \in [m]$. Let $\chi_{k,i}$ be the atomic diagram of $\mcB^k_{i}$. Since the types of the tuples $\bar b_1,\ldots,\bar b_m$ are the same $\mcM\models \exists \bar x \chi_{k,i}(\bar x,\bar b_j)$ for each $i, j\in[m]$. Since the atomic diagram of $\mcB_i^k$ isolates $tp(\bar b_i)$ the disjunction $\bigvee_{i=1}^m\exists \bar x \chi_{k,i}(\bar x,\bar y)$ thus isolates $tp(\bar b_1)$. Note that $\bigvee_{i=1}^m\exists \bar x \chi_{k,i}(\bar x,\bar y)$ is equivalent to $\exists \bar x\bigvee_{i=1}^m \chi_{k,i}(\bar x, \bar y)$. Thus we may in this way create, for each $k\in[r]$ and $k-$type $p$, a $\Sigma_1-$formula which isolates $p$ and whose existential quantifier is witnessed by $\bar a$. As all homogenizing formulas have arity at most $r$, these new formulas will work as homogenizing formulas for $\mcM$ and they are clearly uniformly homogenizing.
\end{proof}
\noindent As Proposition \ref{uniprp} is now proven, we will finish this section with some results showing both how important the uniformly homogenizable structures are for the homogenizable structures, but also how trivial they might be. In the following proposition we work with $\mcM^{eq}$. This structure is obtained from $\mcM$ by adding a new element for each equivalence class of each $\emptyset-$definable equivalence relation on each power $M^n$ of $\mcM$ and expanding the language correspondingly, to indicate which tuples lie in which equivalence classes. This construction is very useful especially in the classification theory part of model theory, but as we will not use it in further detail we refer the reader to Chapter 4.3 in \cite{H} for a complete definition. Note that a \textbf{finite expansion} $\mcN$ of the $V-$structure $\mcM$ is a $V'-$structure of a finite vocabulary $V'\supseteq V$ such that $M\subseteq N\reduct V$ and $|N|-|M|$ is finite.
\begin{prp}\label{mequni}
For each homogenizable structure $\mcM$ there exists a finite expansion $\mcN\subseteq \mcM^{eq}$ such that $\mcN$ is uniformly homogenizable.
\end{prp}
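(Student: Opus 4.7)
The plan is to build $\mcN$ by adjoining to $\mcM$ finitely many imaginaries from $\mcM^{eq}$: one for each $\emptyset-$definable complete $k-$type of $\mcM$ with $k\le r$, where $r$ is the maximum arity of the homogenizing formulas of $\mcM$. Since $\mcM$ is $\omega-$categorical there are only finitely many such types, each being a class of the $\emptyset-$definable equivalence relation ``$tp(\bar x)=tp(\bar y)$'' on $k-$tuples. Let $A$ denote this finite set of new elements. I would define $\mcN$ to have universe $M\cup A$, the same $V-$relations as $\mcM$ (interpreted only on $M-$tuples), a distinguishing unary predicate $U_a$ for each $a\in A$, and for each $k\le r$ a $(k+1)-$ary projection relation $T_k$ with $T_k^\mcN(\bar b,c)$ iff $\bar b\in M^k$ and $c$ is the element assigned to $tp^\mcM(\bar b)$. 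Then $\mcN$ is a finite expansion of $\mcM$ realized inside $\mcM^{eq}$; let $\bar a$ enumerate all of $A$.

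The first step is to show that $\mcN$ is homogenizable, using the homogenizing formulas $\varphi_1,\ldots,\varphi_n$ of $\mcM$. Let $\mcM^+$ be the corresponding homogeneous expansion of $\mcM$, and let $\mcN^+$ be obtained from $\mcN$ by adding relation symbols $R_1,\ldots,R_n$ interpreted via the $\varphi_i$. My claim is that $\mcN^+$ is homogeneous. Given $\bar b,\bar c$ in $\mcN^+$ with the same atomic diagram, split each into an $M-$part and a sort-part. The sort-parts coincide since the $U_a$'s pin each sort element down uniquely. The $M-$parts $\bar b_M,\bar c_M$ share their $V-$atomic diagram and their $R_i-$pattern, and so have the same atomic diagram in $\mcM^+$; homogeneity of $\mcM^+$ then provides an $\mcM^+-$automorphism $\pi$ sending $\bar b_M$ to $\bar c_M$. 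Extending $\pi$ by the identity on $A$ yields an $\mcN^+-$automorphism, since $\pi$ preserves $\mcM$-types and thereby the $T_k$ relations.

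The second step is to show that $\bar a$ witnesses uniform homogenizability; it is enough to prove that the atomic diagram of $\bar a\bar b$ in $\mcN$ already determines its atomic diagram in $\mcN^+$, after which the first step ensures that this diagram isolates $tp^\mcN(\bar a\bar b)$. The $V-$atomic part is immediate. For the added $R_i$: because $\bar a$ contains \emph{every} new element, the $T_k-$edges from the arity-$\le r$ subtuples of the $M-$part of $\bar a\bar b$ to $\bar a$ record the $\mcM$-type of each such subtuple. Since each $\varphi_i$ is $\emptyset-$definable and of arity $\le r$, the value of $R_i$ on any such subtuple is a function of the subtuple's $\mcM$-type, so every $R_i-$value is forced.

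The main technical obstacle is exactly this reconstruction: one must check that the arity-$\le r$ type data encoded by the $T_k-$edges to $\bar a$ really determines every value of every $R_i$, and hence the complete $\mcM^+-$atomic diagram, of an arbitrarily long $M-$tuple $\bar b_M$. This hinges essentially on the homogenizability of $\mcM$ being witnessed by finitely many $\emptyset-$definable formulas of arity at most $r$. A smaller point to verify is that each $\varphi_i$ retains its $\mcM$-meaning when evaluated inside $\mcN$; this is fine because the new sort elements carry no $V-$relations and so cannot serve as spurious existential witnesses.
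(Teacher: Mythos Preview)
Your construction is essentially the paper's: the paper adds, for each homogenizing formula $\varphi_i$, the two imaginary classes of the $\emptyset$-definable equivalence relation $\xi_i(\bar x,\bar y)\Leftrightarrow(\varphi_i(\bar x)\leftrightarrow\varphi_i(\bar y))$, together with unary predicates $P_i$ marking the new points and projection relations $R_i(y,\bar x)$, obtaining $2n$ new elements whose enumeration is the uniform witness. You instead use the finer equivalence ``same complete $k$-type'' for each $k\le r$; this produces more imaginaries but the mechanism is identical, and your write-up is considerably more detailed than the paper's (which simply asserts ``it is now easy to show''). In both versions the point is that the projection edges from $M$-tuples into the new sort record enough type data to force every homogenizing relation.

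One correction is needed. Your ``smaller point'' at the end is wrong as stated: the argument that $\varphi_i$ retains its $\mcM$-meaning in $\mcN$ because sort elements ``cannot serve as spurious existential witnesses'' only handles the $\Sigma_1$ case. Nothing in the proposition guarantees the $\varphi_i$ are existential, and if some $\varphi_i$ contains a universal quantifier the isolated sort elements can change its truth value on $M$-tuples (e.g.\ $\forall y\,(y=x\vee E(x,y))$ becomes everywhere false once a relationless point is adjoined). The fix is immediate: $M$ is $\emptyset$-definable in $\mcN$ via $\neg\bigvee_a U_a(x)$, so take as homogenizing formulas for $\mcN$ the relativizations $\varphi_i^M$ rather than the $\varphi_i$ themselves, and declare $R_i$ false on tuples meeting $A$. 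With that adjustment both of your steps go through verbatim.
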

\begin{proof}
Let the homogenizing formulas of $\mcM$ be $\varphi_1(\bar x_1),\ldots,\varphi_n(\bar x_n)$. These are by definition without parameters, and hence the formula 
\[\xi_i(\bar x,\bar y)\hspace{1cm} \Leftrightarrow \hspace{1cm} \varphi_i(\bar x)\leftrightarrow \varphi_i(\bar y)\]
defines an equivalence relation. Let $V' = V \cup \{P_i(y): i=1,\ldots n\}  \cup \{R_i(y,\bar x_i)\: i = 1,\ldots,n\} \subseteq V^{eq}$ and $\mcN'\subseteq \mcM^{eq}$ be such that $\mcN'$ contains all of $M$ and only the equivalence classes of all the formulas $\xi_i$. Note that for each $i$, $P_i$ in $\mcM^{eq}$ is the relation which holds for elements representing equivalence classes for $\xi_i$ and $R_i$ relates equivalence classes of $\xi_i$ to tuples in that equivalence class. Let $\mcN = \mcN'\reduct V'$, it is now easy to show that this structure is uniformly homogenizable with the uniform witness being the tuple containing all $2n$ elements representing the equivalence classes.
\end{proof}
Algebraic formulas are formulas which are only satisfied by a finite number of tuples. 
If we want an easy example of a homogenizable structure we may take any homogeneous structure and add a finite number of elements which are $\emptyset-$definable and with the same atomic diagram as something in the rest of the structure, but with a different type.
The following proposition ensures that any such structure will be uniformly homogenizable. It is interesting to compare the assumptions of the proposition with Example \ref{nonsymexa} which is both $\Sigma_1-$homo\-ge\-ni\-zable and homogenizable using only algebraic formulas, yet we may not find a homogenization of the structure which satisfies both of these properties at the same time.
\begin{prp}\label{finsatprp}
If $\mcM$ is $\Sigma_1-$homogenizable such that the homogenizing formulas are algebraic then $\mcM$ is uniformly homogenizable.
\end{prp}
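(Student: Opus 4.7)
The plan is to reduce directly to Lemma \ref{unilma1} by showing that the homogenizing formulas can be rewritten so as to become uniformly homogenizing. Let $\exists \bar{x}_i\, \varphi_i(\bar{y}_i, \bar{x}_i)$, for $i = 1,\ldots,n$, be the given $\Sigma_1$-homogenizing formulas, with each $\varphi_i$ quantifier free. By hypothesis each $\exists \bar{x}_i\, \varphi_i(\bar{y}_i, \bar{x}_i)$ is algebraic, so its set of realizations $S_i = \{\bar{y} \in M^{|\bar{y}_i|} : \mcM \models \exists \bar{x}_i\, \varphi_i(\bar{y}, \bar{x}_i)\}$ is \emph{finite}.

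First I would choose, for each $i$ and each $\bar{y} \in S_i$, a witness tuple $\bar{x}(\bar{y})$ such that $\mcM \models \varphi_i(\bar{y}, \bar{x}(\bar{y}))$, and concatenate all these witnesses into a single tuple $\bar{c}_i = \bar{x}(\bar{y}^{(1)}) \frown \cdots \frown \bar{x}(\bar{y}^{(|S_i|)})$, where $\bar{y}^{(1)},\ldots,\bar{y}^{(|S_i|)}$ enumerates $S_i$. Then I would replace $\varphi_i$ by the quantifier free formula
\[
\psi_i(\bar{y}, \bar{z}) \;=\; \bigvee_{j=1}^{|S_i|} \varphi_i(\bar{y}, \bar{z}_{(j)}),
\]
where $\bar{z}_{(j)}$ denotes the $j$th block of $\bar{z}$ of the appropriate length.

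Next I would verify that the collection $\exists \bar{z}\, \psi_i(\bar{y}, \bar{z})$, for $i=1,\ldots,n$, is still a valid $\Sigma_1$-homogenization of $\mcM$. This is immediate because $\exists \bar{z}\, \psi_i(\bar{y}, \bar{z})$ is logically equivalent to $\exists \bar{x}_i\, \varphi_i(\bar{y}, \bar{x}_i)$ (both hold precisely when $\bar{y} \in S_i$), so the new relations defined on $\mcM$ coincide with the old ones, and the same expansion $\mcN$ witnesses homogeneity. Then I would check uniform homogenization: by construction of $\bar{c}_i$, for every $\bar{y} \in S_i$ there is some $j$ with $\bar{y} = \bar{y}^{(j)}$, so $\varphi_i(\bar{y}, (\bar{c}_i)_{(j)})$ holds, whence $\psi_i(\bar{y}, \bar{c}_i)$ holds. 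Thus
\[
\mcM \models \exists \bar{z}\, \forall \bar{y}\, \bigl(\exists \bar{z}_0\, \psi_i(\bar{y}, \bar{z}_0) \rightarrow \psi_i(\bar{y}, \bar{z})\bigr),
\]
so the rewritten homogenization is uniformly homogenizing in the sense of the definition preceding Proposition \ref{uniprp}. Applying Lemma \ref{unilma1} then gives that $\mcM$ is uniformly homogenizable.

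The argument is essentially bookkeeping once one observes that algebraicity pins down \emph{finitely many} tuples to witness. The only subtlety—really the one thing worth checking carefully—is that the substitution of $\psi_i$ for $\varphi_i$ genuinely preserves the homogenization, i.e.\ that the induced expansion $\mcN$ is unchanged; this is guaranteed by the logical equivalence of $\exists \bar{x}_i\, \varphi_i$ and $\exists \bar{z}\, \psi_i$ on $\bar{y}$, which keeps both the arities of the added relation symbols and their interpretations the same.
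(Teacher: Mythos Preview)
Your proof is correct and follows essentially the same approach as the paper: collect finitely many witnesses for the (finitely many) realizations of each algebraic homogenizing formula, rewrite each formula as a disjunction ranging over the blocks of the concatenated witness tuple, observe that the rewritten formulas define the same relations and hence still homogenize, and conclude via the implication ``uniformly homogenizing formulas $\Rightarrow$ uniformly homogenizable''. The only cosmetic difference is that the paper gathers all witnesses for all $i$ into a single tuple $\bar b$ and invokes Proposition~\ref{uniprp}, whereas you keep a separate $\bar c_i$ per formula and invoke Lemma~\ref{unilma1} directly; neither change affects the argument.
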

\begin{proof}
Let $\exists \bar x_1\varphi_1(\bar x_1,\bar y),\ldots,\exists \bar x_n\varphi_n(\bar x_n,\bar y)$ be the homogenizing algebraic formulas, and assume that $\bar a_1,\ldots,\bar a_m$ are the tuples satisfying these formulas with existential quantifiers witnessed by $\bar b_1,\ldots,\bar b_m$ respectively. Let $\bar b=\bar b_1\ldots\bar b_m$ and let $\bar x=\bar x_1\ldots\bar x_m$ be a variable tuple of the same length. For each formula $\varphi_i(\bar x_i,\bar y)$ create a formula $\varphi_i'(\bar x,\bar y)$ which is equivalent with 
\[\varphi_i(\bar x_i,\bar y)\wedge\bigwedge_{j\neq i}\bar x_j=\bar x_j.\] It is clear that $\exists \bar x\varphi'_1,\ldots,\exists \bar x\varphi_n'$ also work as homogenizing formulas, and the element $\bar b$ can be chosen to witness $\bar x$ in all of the formulas. It follows that $\exists \bar x\varphi'_1,\ldots,\exists \bar x\varphi_n'$ are uniformly homogenizing formulas for $\mcM$ and thus $\mcM$ is uniformly homogenizable by Proposition \ref{uniprp}.
\end{proof}

\section{Unavoidably homogenizable structures}\label{unavoidsec}
In section \ref{prelim} we defined the unavoidably homogenizable structures. However nowhere in the definition of unavoidably homogenizable structures do we demand that such a structure has to be homogenizable or even $\omega-$categorical. This follows though from the very tight restriction we keep on the complete types.
\begin{lma}\label{unavoidlma}
If $\mcM$ is unavoidably homogenizable, then $\mcM$ is $\Sigma_1-$homo\-ge\-ni\-zable. 
\end{lma}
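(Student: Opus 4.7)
The plan is to construct a $\Sigma_1$-homogenization explicitly, using the fact that long enough tuples already have quantifier-free isolated types. Suppose $\mcM$ is $k$-unavoidably homogenizable. First I would verify that $\mcM$ is $\omega$-categorical: for $n \geq k$ every $n$-type is isolated by a quantifier-free formula, and in a finite relational vocabulary there are only finitely many quantifier-free formulas in $n$ variables up to logical equivalence, hence only finitely many complete $n$-types; for $n < k$, every $n$-type is a projection of some $k$-type, so again there are only finitely many. In particular each complete type over $\emptyset$ is isolated.

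Next, for each $n < k$ and each complete $n$-type $p$ over $\emptyset$, I would pick a realization $\bar a$ of $p$, extend it to some tuple $\bar a\bar b$ of length at least $k$, and let $\chi_p(\bar x, \bar y)$ be a quantifier-free formula isolating $tp^\mcM(\bar a\bar b)$, which exists by hypothesis. Then the $\Sigma_1$-formula $\exists \bar y\, \chi_p(\bar x, \bar y)$ isolates $p$: any realization $\bar a'$ of it comes with a witness $\bar b'$ that forces $\bar a'\bar b'$ to realize $tp^\mcM(\bar a\bar b)$, and hence $\bar a'$ realizes $p$. Let $V'$ extend $V$ by a fresh $n$-ary symbol $R_p$ for each such pair $(n,p)$ with $n < k$, and let $\mcN$ be the expansion of $\mcM$ to $V'$ in which $R_p^\mcN$ is interpreted as the set defined by $\exists \bar y\, \chi_p(\bar x, \bar y)$.

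To finish I would show that $\mcN$ is homogeneous. Suppose $\bar a_1, \bar a_2 \in N$ share the same atomic diagram in $\mcN$. If their common length is at least $k$, agreement on atomic $V$-formulas determines every quantifier-free $V$-formula, and the hypothesis forces $tp^\mcM(\bar a_1) = tp^\mcM(\bar a_2)$. If the length is less than $k$, agreement on the new symbols $R_p$ records exactly which $p$ each tuple realizes in $\mcM$, so again $tp^\mcM(\bar a_1) = tp^\mcM(\bar a_2)$. In either case $\omega$-categoricity produces an automorphism of $\mcM$ mapping $\bar a_1$ to $\bar a_2$, and since each $R_p$ is $\emptyset$-definable in $\mcM$ this is also an automorphism of $\mcN$.

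The delicate point is the short-tuple case: I must introduce one new relation per type of length $< k$ rather than per tuple, and I must take the existentially quantified formula $\exists \bar y\, \chi_p(\bar x,\bar y)$ rather than $\chi_p$ itself, so that the new relation is genuinely satisfied at every realization of $p$ and not tied to the specific witness $\bar b$ chosen. Everything else reduces to the finiteness of quantifier-free $n$-types and the extension-and-existential-closure trick that lets the hypothesized quantifier-free isolation at length $\geq k$ push down to shorter tuples.
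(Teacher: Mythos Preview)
Your proof is correct and follows essentially the same approach as the paper: extend short tuples to length $\geq k$, use the quantifier-free isolation available there, and existentially quantify out the extension to obtain $\Sigma_1$-formulas isolating the short types. The only cosmetic difference is that the paper indexes the homogenizing formulas by atomic diagrams of $k$-tuples rather than by complete $n$-types for $n<k$, and leaves the verification of homogeneity implicit where you spell it out.
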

\begin{proof}
Assume $k\in\mbbN$ is such that $\mcM$ is $k-$unavoidably homogenizable and let $\bar a_1,\ldots,\bar a_n\in M^k$ be such that all different atomic diagrams are represented. Note that this is finite since the vocabulary is finite relational and it thus becomes clear that $\mcM$ is $\omega-$categorical. Let $\chi_i$ be the atomic diagram of the tuple $\bar a_i$. It is now clear that all the formulas of the form $\exists \bar x\chi_i(\bar y,\bar x)$ together form $\Sigma_1-$homogenizing formulas, as each tuple of size less than $k$ has its type isolated by such a formula.
\end{proof}
As the properties of unavoidably homogenizable structures are very close to the uniform and boundedly homogenizable structures, we may prove a proposition which is similar to Proposition \ref{symbasicprp}.
\begin{prp}\label{unavprp} Assume that $\mcM$ is an $\omega-$categorical countably infinite structure and $k\in\mbbN$, then the following are equivalent.
\begin{enumerate}[label=(\roman*)]
\item $\mcM$ is $k-$unavoidably homogenizable.
\item For each $\mcA\subseteq \mcM$ with $|A|\geq k$ and each embedding $f:\mcA\rightarrow\mcM$, $f$ may be extended into an automorphism.
\item $\mcM$ is model-complete and each finite $\mcA\subseteq \mcM$ such that $|A|\geq k$ is an amalgamation base for $Age(\mcM)$.
\end{enumerate}
\end{prp}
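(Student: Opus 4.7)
The plan is to follow the same template used in Propositions \ref{symbasicprp} and \ref{uniprp}: the three conditions are packaged versions of the same basic dichotomy (type isolation by a quantifier free formula), and the existing lemmas translate between the formulations.

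First I would establish the equivalence of (1) and (2) via Lemma \ref{qrfreelma}. Since $\mcM$ is $\omega$-categorical and countably infinite, it is saturated, so the lemma applies to every tuple $\bar a\in M$. By definition, $\mcM$ being $k$-unavoidably homogenizable is precisely the statement that for every $n\geq k$ and every $\bar a\in M^n$ the type $tp(\bar a)$ is isolated by a quantifier free formula, which by Lemma \ref{qrfreelma} is equivalent to: every embedding $f:\mcM\reduct\bar a\to\mcM$ with $|\bar a|\geq k$ extends to an automorphism of $\mcM$. That is exactly condition (2).

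For the implication (1)$\Rightarrow$(3), Lemma \ref{unavoidlma} gives that $\mcM$ is $\Sigma_1$-homogenizable. Combined with $\omega$-categoricity (and the fact that every formula then defines a finite union of $\Sigma_1$-isolated types, hence is equivalent to a $\Sigma_1$-formula), this yields that $Th(\mcM)$ is model-complete. Any finite $\mcA\subseteq\mcM$ of size at least $k$ enumerates a tuple whose type is quantifier-free isolated by the $k$-unavoidability hypothesis, so Lemma \ref{prpamfree} shows $\mcA$ is an amalgamation base for $Age(\mcM)$. For the converse (3)$\Rightarrow$(1), given model-completeness and the amalgamation-base hypothesis, Lemma \ref{amtofree} applies to every tuple of length $\geq k$ and delivers quantifier free isolation of each such type, which is the definition of $k$-unavoidably homogenizable.

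There is essentially no obstacle here: all the real work is already done in Lemmas \ref{qrfreelma}, \ref{prpamfree}, \ref{amtofree}, and \ref{unavoidlma}. The only point requiring a moment's care is the derivation of model-completeness from $\Sigma_1$-homogenizability together with $\omega$-categoricity in the step (1)$\Rightarrow$(3), but this is the standard observation that in an $\omega$-categorical theory $\Sigma_1$-isolation of every complete type propagates to $\Sigma_1$-definability of every formula.
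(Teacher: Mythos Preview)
Your outline for (1)$\Leftrightarrow$(2) and for (3)$\Rightarrow$(1) matches the paper exactly: these are immediate from Lemmas~\ref{qrfreelma} and~\ref{amtofree}.

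The gap is in your derivation of model-completeness in (1)$\Rightarrow$(3). You assert that ``$\Sigma_1$-homogenizability together with $\omega$-categoricity'' yields model-completeness, via $\Sigma_1$-isolation of every type. That implication is false in general: Example~\ref{nonsymexa} in the paper, $(\mbbQ^+\cup\{0\},<)$, is $\omega$-categorical and $\Sigma_1$-homogenizable but explicitly \emph{not} model-complete. The type of the endpoint $0$ is not $\Sigma_1$-isolated, because any self-embedding of $\mcM$ shifting $0$ to a positive element preserves all $\Sigma_1$-formulas satisfied by $0$. So $\Sigma_1$-homogenizability does not by itself give $\Sigma_1$-isolation of types; the negated instances of the homogenizing formulas matter.

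The paper closes this gap by routing through Lemma~\ref{symtosigma}: once Lemma~\ref{unavoidlma} shows $\mcM$ is homogenizable, $k$-unavoidability trivially implies bounded homogenizability, and Lemma~\ref{symtosigma} then yields model-completeness. An alternative fix, closer to what you may have had in mind, is to observe that the \emph{proof} of Lemma~\ref{unavoidlma} actually shows more than its statement: each type of size $<k$ is isolated by a single formula $\exists\bar x\,\chi_i(\bar y,\bar x)$, and types of size $\geq k$ are quantifier-free isolated by hypothesis, so every type is $\Sigma_1$-isolated and model-completeness follows as you indicated. Either route works, but you must invoke one of them rather than the bare conjunction of $\Sigma_1$-homogenizability and $\omega$-categoricity.
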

\begin{proof}
$(i)$ is equivalent to $(ii)$ follows from Lemma \ref{qrfreelma}. If we assume $(i)$, Lemma \ref{unavoidlma} implies that $\mcM$ is homogenizable and thus the definition of unavoidably homogenizable implies that $\mcM$ is boundedly homogenizable. Thus Lemma \ref{symtosigma} and Lemma \ref{prpamfree} implies $(iii)$. That $(iii)$ implies $(i)$ follows from Lemma \ref{amtofree}.
\end{proof}

\begin{rmk} The author classified the unavoidably homogenizable graphs in \cite{A}. 
However we have no real hope of classifying the unavoidably homogenizable structures properly without first classifying the homogeneous structures, since from homogeneous structures we may easily create similar unavoidably homogenizable structures in the following way. Let $\mcM_0,\mcM_1$ be two homogeneous structures over a vocabulary $V$ and let $R_0,R_1$ be $k-$ary relational symbols which are not in $V$. Let $\mcN$ be the structure over $V\cup\{R_0,R_1\}$ with universe $M_0\dot\cup M_1$ such that for $i\in\{0,1\}$ $(\mcN\reduct M_i)\reduct V = \mcM_i$ and no relations from $V$ hold between elements in $M_0$ and $M_1$ in $\mcN$. Furthermore, create $\mcN$ such that $\mcN\models R_i(\bar a)$ for every $k-$tuple $\bar a\in M_i$ of distinct elements. \\\indent
The structure $\mcN$ is unavoidably homogenizable since for any $\mcA\subseteq N$ such that $|A|\geq 2k-1$ there will be a tuple which satisfies $R_0$ or $R_1$, but then if $\mcA$ is embedded in $\mcN$ the parts belonging to $M_0$ and $M_1$ have to be mapped to the correct side, and since $\mcM_0$ and $\mcM_1$ are homogeneous, this may be extended to an automorphism. This proves that $\mcN$ is unavoidably homogenizable by Proposition \ref{unavprp}.
\end{rmk}
It seems that we may at least assume that all elements are of the same atomic diagram in an unavoidably homogenizable structure, as the following proposition shows.
\begin{prp}
Let $\xi(x,y)$ be the equivalence relation which holds if two elements satisfy the same atomic diagram.  If $\mcM$ is $k-$unavoidably homogenizable then each infinite equivalence class $A$ of $\xi$ is such that $\mcM\reduct A$ is a $k-$unavoidably homogenizable structure.
\end{prp}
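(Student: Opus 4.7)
The plan is to exploit two facts: first, by Lemma~\ref{unavoidlma} the structure $\mcM$ is $\omega-$categorical, hence $\omega-$saturated, so the orbits of $\Aut(\mcM)$ on $n-$tuples coincide with the complete $n-$types over $\es$; second, the relation $\xi$ is definable by a quantifier-free formula (namely the finite conjunction, over the finitely many atomic formulas $\varphi(x)$ with a single free variable available in the finite relational vocabulary, of biconditionals $\varphi(x)\leftrightarrow \varphi(y)$). Consequently every $\sigma\in\Aut(\mcM)$ preserves $\xi$, so each $\xi-$class, including the given infinite class $A$, is setwise invariant under $\Aut(\mcM)$, and the restriction $\sigma\reduct A$ is an automorphism of $\mcM\reduct A$.

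Fix $n\geq k$ and a tuple $\bar a\in A^n$. The goal is to show that the atomic diagram $\chi_{\bar a}(\bar x)$ of $\bar a$ isolates $tp^{\mcM\reduct A}(\bar a)$, which suffices since $\chi_{\bar a}$ is quantifier-free and $\bar a, n$ were arbitrary. So let $\bar b\in A^n$ be any tuple realizing $\chi_{\bar a}$ in $\mcM\reduct A$; since $A$ carries the induced substructure from $\mcM$, the tuples $\bar a$ and $\bar b$ also have the same atomic diagram in $\mcM$. Because $\mcM$ is $k-$unavoidably homogenizable and $n\geq k$, this yields $tp^\mcM(\bar a)=tp^\mcM(\bar b)$.

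By $\omega-$saturation of $\mcM$ there exists $\sigma\in\Aut(\mcM)$ with $\sigma(\bar a)=\bar b$. By the observation in the first paragraph, $\sigma\reduct A\in\Aut(\mcM\reduct A)$, and it sends $\bar a$ to $\bar b$, hence $tp^{\mcM\reduct A}(\bar a)=tp^{\mcM\reduct A}(\bar b)$. Thus every $n-$type of $\mcM\reduct A$ for $n\geq k$ is isolated by the quantifier-free formula $\chi_{\bar a}$, so $\mcM\reduct A$ is $k-$unavoidably homogenizable.

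The argument is essentially a direct transfer through the automorphism group; the only delicate point is to confirm that $\xi$ really is $\es-$definable by a quantifier-free formula so that $A$ is setwise invariant under $\Aut(\mcM)$. This is immediate from the finiteness of the relational vocabulary, so no real obstacle arises. The infinitude of $A$ is used only to the extent that $\mcM\reduct A$ contains tuples of every length $n\geq k$; otherwise the condition would be vacuously true for those $n$ with $n>|A|$, which is why the statement singles out the infinite equivalence classes.
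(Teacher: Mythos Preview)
Your proof is correct and follows essentially the same route as the paper: both arguments observe that $\xi$ is quantifier-free definable, hence each $\xi$-class is setwise invariant under $\Aut(\mcM)$, and then transfer the automorphism obtained from $k$-unavoidable homogenizability of $\mcM$ down to $\mcM\reduct A$. The only cosmetic difference is that the paper phrases the argument via the embedding-extension characterization of Proposition~\ref{unavprp}, while you work directly with the type-isolation definition and invoke $\omega$-saturation to produce the automorphism; these are the same idea unpacked at different points.
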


\begin{proof}
Let $\mcA = \mcM\reduct A$ and choose $B\subseteq A$ such that $|B|\geq k$. If $f:\mcA\reduct B\rightarrow \mcA$ is an embedding then it is also an embedding into $\mcM$, and hence by Proposition \ref{unavprp} there is an automorphism $g$ of $\mcM$ extending $f$. However the elements in $A$ are exactly those who have the same atomic diagram, hence $g$ must map $A$ to $A$, so $g\reduct A$ is an automorphism of $\mcA$ which extends the embedding $f$, so again by Proposition \ref{unavprp}, it follows that $\mcA$ is $k-$unavoidably homogenizable.
\end{proof}

\section{Unary homogenizable structures}\label{unsec}
\noindent The structures which we may homogenize by only adding new unary relational symbols are quite special and we call these structures unary homogenizable. We quickly see that, unless it is homogeneous, such a structure is non-transitive i.e. there are elements $a, b$ such that $a$ can not be mapped to $b$ by an automorphism. In this section we explore these structures further, exposing a quite close relation between unary homogenizable and uniformly homogenizable structures in Theorem \ref{unatouni}.\\\indent In a structure $\mcM$, the algebraic closure of a set $X\subseteq M$ is the set of all elements $a\in M$ such that $tp(a/ X)$ is only realized by a finite number of elements in $M$. If for each $X\subseteq M$ the algebraic closure of $X$ equals to $X$ then we say that the algebraic closure is degenerate.
Any homogeneous structure $\mcM$ such that $Age(\mcM)$ satisfies the disjoint amalgamation property has degenerate algebraic closure, so the restriction in the following theorem is not as large as it might seem. 
Note that if $\mcM_1 = (D_1; R_1^{\mcM_1},\ldots,R_n^{\mcM_1})$, $\mcM_2 = (D_2 ; R_1^{\mcM_2},\ldots,R_n^{\mcM_2})$ are structures of the same signature then we define the union structure in the following way $\mcM_1\cup\mcM_2 = (D_1\cup D_2 ; R_1^{\mcM_1}\cup R_1^{\mcM_2},\ldots,R_n^{\mcM_1}\cup R_n^{\mcM_2})$.
\begin{thm}\label{unatouni}
If $\mcM$ is a countably infinite unary boundedly homogenizable structure with degenerate algebraic closure then there are infinite uniformly homogenizable structures $\{\mcN_i\}_{i\in I}$ with only finitely many different isomorphism types such that
\[\mcM = \bigcup_{i\in I}\mcN_i\]
\end{thm}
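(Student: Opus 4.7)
The plan is to cover $\mcM$ by substructures $\mcN_{\bar b}$ indexed by anchoring tuples $\bar b$ that meet every $1$-type over $\emptyset$ and whose own $\mcM$-type is isolated by a quantifier free formula. Let $\varphi_1(x),\ldots,\varphi_n(x)$ be the unary homogenizing formulas and $\overline{\mcM}$ the corresponding homogeneous expansion by unary predicates $P_1,\ldots,P_n$. Because the added predicates are unary, the $\mcM$-type of any tuple $\bar c$ in $\mcM$ is determined by its $V$-atomic diagram together with the $(P_1,\ldots,P_n)$-values on each individual element of $\bar c$. Hence $tp^{\mcM}(\bar c)$ is isolated by a quantifier free $V$-formula precisely when the $V$-atomic diagram of $\bar c$ forces these unary values, which I will informally call the coloring of $\bar c$. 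By degenerate algebraic closure, each $1$-type over $\emptyset$ in $\mcM$ has infinitely many realizations.

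Starting from a tuple $\bar c$ containing one element from each $1$-type over $\emptyset$, bounded homogenizability supplies $\bar d$ with $tp^{\mcM}(\bar c\bar d)$ quantifier free isolated; set $\bar b := \bar c\bar d$, an anchoring tuple. Define $\mcN_{\bar b}$ to be the substructure of $\mcM$ whose universe consists of the elements of $\bar b$ together with all $a \in M$ such that $tp^{\mcM}(\bar b a)$ is isolated by a quantifier free formula. For any tuple $\bar a$ from $N_{\bar b}$, the $V$-atomic diagram of $(\bar b,a_i)$ forces the coloring of $a_i$ for each component $a_i$, hence the $V$-atomic diagram of $(\bar b,\bar a)$ forces the full coloring, and so $tp^{\mcM}(\bar b\bar a)$ is quantifier free isolated. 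Homogenizing $\mcN_{\bar b}$ by the restricted predicates $P_i \cap N_{\bar b}$ should then yield a homogeneous expansion with $\bar b$ as the uniform witness.

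For the covering property, given $a \in M$ I apply bounded homogenizability to $(a)$ to obtain $\bar b_0$ with $tp^{\mcM}(a\bar b_0)$ quantifier free isolated; then I adjoin one element from each $1$-type missing from $(a,\bar b_0)$ and re-apply bounded homogenizability to obtain an anchoring tuple $\bar b$ with $a \in N_{\bar b}$, using the coloring characterization to check that the quantifier free isolation of $tp^{\mcM}(a\bar b_0)$ is preserved through the extension. The isomorphism type of $\mcN_{\bar b}$ depends only on the $\mcM$-type of $\bar b$, whose arity is bounded by the number of $1$-types over $\emptyset$ plus the length supplied by a single application of bounded homogenizability; $\omega$-categoricity then leaves only finitely many isomorphism types among the $\mcN_{\bar b}$. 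Since bounded homogenizability forces $\mcM$ to be model-complete and degenerate algebraic closure gives us an unbounded supply of realizations of each type, each $\mcN_{\bar b}$ is infinite.

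The main obstacle will be verifying that $\mcN_{\bar b}$ is genuinely uniformly homogenizable as a $V$-structure in its own right, rather than merely that $\mcM$-types of tuples in $N_{\bar b}$ containing $\bar b$ are quantifier free isolated. Homogeneity must be checked intrinsically in $\overline{\mcN_{\bar b}}$, and the set $N_{\bar b}$ must be preserved by the relevant automorphisms. A clean way to handle this is to observe that any embedding $\mcA \to \mcN_{\bar b}$ is also an embedding $\mcA \to \mcM$ which, composed with an automorphism of $\overline{\mcM}$ furnished by homogeneity, restricts to the required automorphism of $\overline{\mcN_{\bar b}}$ once one verifies that the image stays inside $N_{\bar b}$; here the forcing-of-coloring criterion is preserved because $\bar b$ has its image described by a tuple of the same $\mcM$-type.
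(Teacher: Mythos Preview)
Your approach is close to the paper's but has one genuine gap. The equality $\mcM = \bigcup_{i\in I}\mcN_i$ is meant as an equality of \emph{structures}, using the union defined just before the theorem: $\mcM_1\cup\mcM_2 = (D_1\cup D_2 ; R_1^{\mcM_1}\cup R_1^{\mcM_2},\ldots)$. Since each $\mcN_i$ is a substructure of $\mcM$, the inclusion $\bigcup_i\mcN_i \subseteq \mcM$ is automatic; for equality you must show that every tuple $\bar c$ realizing a relation $R$ in $\mcM$ lies entirely inside some single $\mcN_i$. Your covering argument only produces, for each \emph{single element} $a\in M$, an anchoring tuple $\bar b$ with $a\in N_{\bar b}$. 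That yields $M=\bigcup_i N_i$ as sets, but it does not guarantee that a pair $(a_1,a_2)\in E^{\mcM}$ (say) sits inside one $N_{\bar b}$, so the relational part of the union could be strictly smaller than $R^{\mcM}$. The paper handles this by letting $\rho$ be the maximal arity of a relation symbol, enumerating \emph{all} $\rho$-tuples $\bar a_i$, and extending each to a tuple $\bar a_i\bar b_i$ with quantifier-free isolated type; then every $\rho$-tuple, and hence every relation instance, is contained in some $\mcN_{\bar a_i\bar b_i}$. Your argument is repaired the same way: run the covering construction starting from an arbitrary $\rho$-tuple rather than a single element.

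Two smaller points. First, the requirement that $\bar b$ meet every $1$-type is unnecessary: the paper's Lemma~\ref{unlma2} shows $\mcN_{\bar a}$ is uniformly homogenizable for \emph{any} $\bar a$, via Lemma~\ref{unprp1}, without asking that $\bar a$ see all colors. Second, your bound ``number of $1$-types plus the length supplied by a single application of bounded homogenizability'' is not quite right, since your covering step applies bounded homogenizability twice; what you actually need (and what the paper uses) is that by $\omega$-categoricity there are only finitely many types of the resulting anchoring tuples, which follows once their lengths are uniformly bounded --- and that bound comes from the finiteness of the set of $\rho$-types, not from a single invocation.
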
  
As a first step to prove the above theorem we show the following Lemma.
\begin{lma}\label{unprp1}
Let $\mcM$ be unary homogenizable and $\bar a,\bar b\in M$. If both $tp(\bar a)$ and $tp(\bar b)$ are isolated by quantifier free formulas then $tp(\bar a\bar b)$ is isolated by quantifier free formulas.
\end{lma}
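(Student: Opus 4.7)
The plan is to use the unary nature of the homogenization crucially. Let $\mcN$ be the homogeneous expansion of $\mcM$ by unary predicates $P_1,\ldots,P_n$ representing unary formulas $\varphi_1(x),\ldots,\varphi_n(x)$. I want to show that the $V$-atomic diagram of $\bar a\bar b$ isolates $tp^{\mcM}(\bar a\bar b)$, so I will start with an arbitrary tuple $\bar a'\bar b'$ having the same $V$-atomic diagram as $\bar a\bar b$ and prove that $tp^{\mcM}(\bar a'\bar b') = tp^{\mcM}(\bar a\bar b)$.

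First I would observe that $\bar a$ and $\bar a'$ have the same $V$-atomic diagram, so by the hypothesis that $tp^{\mcM}(\bar a)$ is isolated by a quantifier-free formula, $tp^{\mcM}(\bar a') = tp^{\mcM}(\bar a)$; in particular, for each coordinate $a_k$ the element $a_k'$ realises the same complete type as $a_k$, and therefore satisfies exactly the same formulas $\varphi_i(x)$. The same argument applied with $\bar b$ in place of $\bar a$ shows that the coordinates of $\bar b'$ satisfy exactly the same $\varphi_i$'s as the corresponding coordinates of $\bar b$.

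Now comes the step where the unary assumption is essential. The atomic diagram of $\bar a\bar b$ in $\mcN$ consists of its $V$-atomic diagram together with a list recording, for each single element of the tuple, which new unary predicates $P_i$ it satisfies; since each $P_i$ is unary, no relations between distinct elements are added. By the previous paragraph, each element of $\bar a'\bar b'$ satisfies precisely the same $P_i$'s as the corresponding element of $\bar a\bar b$, and by assumption the two tuples agree on the $V$-atomic diagram. Hence $\bar a\bar b$ and $\bar a'\bar b'$ have the same atomic diagram in $\mcN$.

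Finally, by homogeneity of $\mcN$ any two tuples with the same atomic diagram are in the same $\Aut(\mcN)$-orbit, so $tp^{\mcN}(\bar a\bar b) = tp^{\mcN}(\bar a'\bar b')$. Since $\mcM$ is a reduct of $\mcN$ this immediately gives $tp^{\mcM}(\bar a\bar b) = tp^{\mcM}(\bar a'\bar b')$, completing the proof. There is no real obstacle here beyond keeping the bookkeeping straight; the only ingredient besides the hypotheses is the unary character of the new symbols, which is precisely what lets an atomic diagram be reconstructed from the $V$-atomic diagram together with the $1$-types of the individual coordinates.
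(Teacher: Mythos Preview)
Your proof is correct and follows essentially the same route as the paper: take a tuple with the same $V$-atomic diagram, use the quantifier-free isolation of $tp(\bar a)$ and $tp(\bar b)$ to see that corresponding coordinates satisfy the same unary homogenizing formulas, and then invoke homogeneity of the expansion. The paper phrases the middle step as ``$f$ remains an isomorphism in the expanded vocabulary,'' but the content is identical.
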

\begin{proof}
Assume that $\bar c\bar d\in M$ are such that there is an isomorphism $f:\mcM\reduct \bar a\bar b \rightarrow \mcM\reduct \bar c\bar d$ and $\varphi(x)$ is a homogenizing formula such that for some $a_0\in \bar a\bar b, \mcM\models \varphi(a_0)$. Either $a_0\in \bar a$ or $a_0\in\bar b$ and since both of the tuples have types isolated by quantifier free formulas $\mcM\models \varphi(a_0)$ if and only if $\mcM\models\varphi(f(a_0))$.
If we homogenize $\mcM$ we add relations for $\varphi$ on the same elements in $\bar a\bar b$ and $\bar c\bar d$, i.e. $f$ will still be an isomorphism, when extended to the new vocabulary. Thus $\bar a\bar b$ and $\bar c\bar d$ satisfy the same homogenizing formulas, hence $f$ may be extended to an automorphism and hence $tp(\bar a\bar b)=tp(\bar c\bar d)$.
\end{proof}
\noindent The type condition in the previous lemma does not imply unary homogenizability, however we can at least show that $\mcM$ must have at least one unary homogenizing formula. 
\begin{cor}
Let $\mcM$ be a non-homogeneous homogenizable $V-$structure such that for any $\bar a,\bar b \in M$ with $tp(\bar a)$ and $tp(\bar b)$ isolated by quantifier free formulas, $tp(\bar a\bar b)$ is isolated by a quantifier free formula. Then there is a vocabulary $V'\supseteq V$ and a $V'-$structure $\mcN$ which is non-homogeneous and unary homogenizable such that $\mcN\reduct V = \mcM$ and $Aut(\mcM)=Aut(\mcN)$.
\end{cor}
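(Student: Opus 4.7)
The plan is to take $V' = V$ and $\mcN = \mcM$, so that $\mcN \reduct V = \mcM$, $Aut(\mcM) = Aut(\mcN)$, and non-homogeneity of $\mcN$ all hold for free; what remains is to show that $\mcM$ itself is unary homogenizable. Since $\mcM$ is $\omega$-categorical (being homogenizable over a finite relational vocabulary), there are only finitely many $1$-types $p_1,\ldots,p_m$ of $\mcM$, each isolated by some $V$-formula $\theta_i(x)$. I propose adjoining unary relation symbols $P_1,\ldots,P_m$ with $P_i$ interpreting $\theta_i$, forming the expansion $\mcM^*$ over $V^* := V \cup \{P_1,\ldots,P_m\}$. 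The goal is to prove that $\mcM^*$ is homogeneous, which says precisely that $\theta_1,\ldots,\theta_m$ are unary homogenizing formulas for $\mcM$.

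Before that, I note that the discussion preceding the statement already guarantees at least one $\theta_i$ is not equivalent to a QF $V$-formula: otherwise every $1$-type would be QF-isolated, and then iterating the hypothesis on singletons would force every $n$-type to be QF-isolated, making $\mcM$ homogeneous and contradicting our assumption. So the expansion genuinely adds information.

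For the homogeneity of $\mcM^*$, let $\bar a, \bar b \in M$ have the same $V^*$-atomic diagram; equivalently, they share a $V$-atomic diagram and $tp^\mcM(a_j) = tp^\mcM(b_j)$ for each $j$. Split $\bar a = \bar a^{(1)} \bar a^{(2)}$, with $\bar a^{(1)}$ collecting those entries whose $1$-type is QF-isolated in $V$ and $\bar a^{(2)}$ the rest, and analogously $\bar b = \bar b^{(1)} \bar b^{(2)}$. Iterated application of the hypothesis on the singletons in $\bar a^{(1)}$ yields that $tp^\mcM(\bar a^{(1)})$ is QF-$V$-isolated, and since $\bar b^{(1)}$ has the same $V$-atomic diagram, $tp^\mcM(\bar a^{(1)}) = tp^\mcM(\bar b^{(1)})$. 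Choosing $\sigma \in Aut(\mcM)$ with $\sigma(\bar a^{(1)}) = \bar b^{(1)}$ and replacing $\bar a$ by $\sigma(\bar a)$, I may assume $\bar a^{(1)} = \bar b^{(1)}$.

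What remains---and what I expect to be the main obstacle---is to show $tp^\mcM(\bar a^{(2)} / \bar a^{(1)}) = tp^\mcM(\bar b^{(2)} / \bar a^{(1)})$. Here all entries of $\bar a^{(2)}, \bar b^{(2)}$ have $1$-types which are not QF-$V$-isolated, so the hypothesis does not apply directly to these tuples. My plan is to exploit the homogenization $\mcM^+$ of $\mcM$ together with the existence of at least one unary homogenizing formula noted above: for each higher-arity homogenizing formula $\varphi_j(\bar x)$, I would argue that $\varphi_j(\bar a^{(1)}, \bar c)$ depends only on the $1$-types of the entries of $\bar c$ and the $V$-atomic diagram of $(\bar a^{(1)}, \bar c)$. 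Any such discrepancy would, upon adjoining witnesses for the $\Sigma_1$-isolating formulas of the relevant types, produce two tuples with QF-$V$-isolated types whose concatenation fails to be QF-$V$-isolated, contradicting the hypothesis. Propagating this through each $\varphi_j$ then gives $tp^\mcM(\bar a) = tp^\mcM(\bar b)$, establishing homogeneity of $\mcM^*$ and hence unary homogenizability of $\mcM$.
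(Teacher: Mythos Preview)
Your strategy differs from the paper's in a significant way: you aim to show that $\mcM$ itself (with $V'=V$) is unary homogenizable, which is strictly stronger than what the corollary asserts. The paper instead takes a genuine expansion $V'\supsetneq V$: add relation symbols for the non-unary homogenizing formulas of $\mcM$ and let $\mcN$ be the resulting $V'$-structure. Then $\mcN$ is automatically unary homogenizable (the remaining unary homogenizing formulas finish the job), $Aut(\mcN)=Aut(\mcM)$ since only $\emptyset$-definable relations were added, and the discussion preceding the corollary---that non-homogeneity plus the type condition forces some $1$-type to be non-QF-isolated---guarantees that $\mcN$ is still not homogeneous. No further analysis of how higher-arity formulas interact with the hypothesis is needed.

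Your argument, by contrast, must show that all higher-arity homogenizing information is already determined by the atomic diagram together with the entrywise $1$-types, and this is exactly where there is a genuine gap. The claim that $\varphi_j(\bar a^{(1)},\bar c)$ depends only on the $1$-types of the entries of $\bar c$ and the $V$-atomic diagram of $(\bar a^{(1)},\bar c)$ is just a restatement of ``$\mcM$ is unary homogenizable''. Your proposed justification invokes $\Sigma_1$-isolating formulas, but model-completeness is not among the hypotheses; and even granting it, adjoining existential witnesses to a tuple does not in general produce a tuple whose type is QF-isolated, so you cannot manufacture a violation of the type condition this way. The hypothesis only controls concatenations of tuples that are \emph{already} QF-isolated; it says nothing directly about tuples whose entries have non-QF-isolated $1$-types, which is precisely the case left open in your ``main obstacle''. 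The paper's expansion sidesteps this difficulty entirely.
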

\begin{proof}
If no unary homogenizing formulas exist, then for each $a,b\in M$, $tp(a)$ and $tp(b)$ are isolated by quantifier free formulas. Thus by the assumption $tp(ab)$ is isolated by a quantifier free formula. It follows by induction that for any tuple $\bar c\in M$, $tp(\bar c)$ is isolated by a quantifier free formula hence $\mcM$ has quantifier elimination which is equivalent with being homogeneous, a contradiction.
\end{proof}

\noindent Another corollary from the previous Lemma shows that the boundedly homogenizable structures are quite easy to reach from the unary homogenizable.
\begin{cor}
If $\mcM$ is a unary homogenizable structure such that for each $a\in M$ there is $\bar b\in M$ such that $tp(a\bar b)$ is isolated by a quantifier free formula then $\mcM$ is boundedly homogenizable.
\end{cor}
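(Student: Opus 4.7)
The plan is to derive this corollary as a direct iterated application of Lemma \ref{unprp1}. The hypothesis gives us quantifier-free isolation for every \emph{singleton} $a$ together with some associated tuple $\bar b$; Lemma \ref{unprp1} tells us that quantifier-free isolation is preserved under concatenation in the unary homogenizable setting. So the strategy is simply to produce a witness for each coordinate of an arbitrary tuple and then paste the witnesses together.

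More concretely, given any $\bar a = (a_1,\ldots,a_n)\in M$, I would first invoke the hypothesis $n$ times to choose, for each $i\in\{1,\ldots,n\}$, a tuple $\bar b_i\in M$ such that $tp(a_i\bar b_i)$ is isolated by a quantifier-free formula. Then I would perform an induction on $i$: Lemma \ref{unprp1} applied to $a_1\bar b_1$ and $a_2\bar b_2$ gives that $tp(a_1\bar b_1 a_2\bar b_2)$ is quantifier-free isolated; applying it again to $a_1\bar b_1 a_2\bar b_2$ and $a_3\bar b_3$ yields quantifier-free isolation of $tp(a_1\bar b_1 a_2\bar b_2 a_3\bar b_3)$, and so on. After $n-1$ applications of the lemma, the type of $a_1\bar b_1\ldots a_n\bar b_n$ is isolated by some quantifier-free formula $\chi(x_1,\bar y_1,\ldots,x_n,\bar y_n)$.

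To finish, I would reorder the variables: the formula $\chi$ rearranged as $\chi'(x_1,\ldots,x_n,\bar y_1,\ldots,\bar y_n)$ is still quantifier-free and isolates $tp(\bar a\bar b')$ for $\bar b' := \bar b_1\ldots\bar b_n$, since permuting coordinates of a tuple permutes the variables of an isolating quantifier-free formula without affecting isolation. As $\bar a$ was arbitrary, this verifies the definition of boundedly homogenizable.

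I do not expect any serious obstacle: the only point that requires a word of care is the reordering step, which is harmless because quantifier-free isolation is a syntactic condition that is evidently stable under coordinate permutation. The substantive content is entirely carried by Lemma \ref{unprp1}, which is already at our disposal.
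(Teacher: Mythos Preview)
Your proposal is correct and follows essentially the same route as the paper's proof: pick witnesses $\bar b_i$ for each coordinate using the hypothesis, then iterate Lemma~\ref{unprp1} to conclude that $tp(a_1\bar b_1\ldots a_n\bar b_n)$ is quantifier-free isolated, and finally reorder to obtain the witness for $\bar a$. The paper compresses the induction and the reordering into a single line, but the argument is the same.
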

\begin{proof}
If $\bar c = (c_1,\ldots,c_n) \in M$ let $\bar b_1,\ldots,\bar b_n\in M$ be such that for each $i=1,\ldots,n$, $tp(c_i\bar b_i)$ is isolated by a quantifier free formula. Lemma \ref{unprp1} now gives us (through an obvious use of induction on $n$) that $tp(c_1\ldots c_n\bar b_1\ldots \bar b_n) = tp(\bar c\bar b_1\ldots \bar b_n)$ is isolated by a quantifier free formula. 
\end{proof}
\noindent We continue towards the goal of proving Theorem \ref{unatouni} by introducing a substructure consisting of all elements which behave nicely with respect to a certain tuple.
\begin{defi} Let $\mcM$ be a structure with $\bar a\in M$ and define the set
\[X_{\bar a}= \{b\in M : tp(b\bar a) \text{ is isolated by a quantifier free formula}\}.\]
Define the structure $\mcN_{\bar a} = \mcM\reduct X_{\bar a}\cup \bar a$
\end{defi}
\noindent The structure $\mcN_{\bar a}$ is focused around $\bar a$ and indeed this tuple is so special that it becomes the element witnessing that $\mcN_{\bar a}$ is uniformly homogenizable.
\begin{lma}\label{unlma2} If $\mcM$ is a countably infinite unary homogenizable structure with $\bar a\in M$, then the following hold:
\begin{itemize}
\item If $\bar b\in X_{\bar a}$ then $X_{\bar a}\subseteq X_{\bar a\bar b}$.
\item If $\bar b\in M$ and $tp^\mcM(\bar a) = tp^\mcM(\bar b)$ then $\mcN_{\bar a} \cong \mcN_{\bar b}$
\item $\mcN_{\bar a}$ is uniformly homogenizable.
\end{itemize}
\end{lma}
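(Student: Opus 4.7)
The plan is to prove the three parts in order, with Lemma \ref{unprp1} as the main combinatorial tool and saturation of $\mcM$ (from $\omega$-categoricity together with countability) to supply automorphisms. For the first part, given $\bar b \in X_{\bar a}$ and any $c \in X_{\bar a}$, both $tp^\mcM(c\bar a)$ and $tp^\mcM(\bar a\bar b)$ are isolated by quantifier free formulas by definition. Applying Lemma \ref{unprp1} to these two tuples shows that $tp^\mcM(c\bar a\bar a\bar b)$, and hence after deduplicating the shared occurrences of $\bar a$ the type $tp^\mcM(c\bar a\bar b)$, is isolated by a quantifier free formula, so $c \in X_{\bar a\bar b}$. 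For the second part, $\mcM$ is countable and $\omega$-categorical, hence saturated, so $tp^\mcM(\bar a) = tp^\mcM(\bar b)$ yields $\alpha \in \Aut(\mcM)$ with $\alpha(\bar a) = \bar b$. Since quantifier free isolation of a joint type is preserved under automorphisms, $c \in X_{\bar a}$ holds if and only if $\alpha(c) \in X_{\bar b}$, and $\alpha$ thus restricts to an isomorphism $\mcN_{\bar a} \to \mcN_{\bar b}$.

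For the third part I propose $\bar a$ itself as the uniform witness inside $\mcN_{\bar a}$. For any finite tuple $\bar d$ in $N_{\bar a}$, iterating the first part together with Lemma \ref{unprp1} shows that $tp^\mcM(\bar a\bar d)$ is isolated by a quantifier free formula in $\mcM$; it remains to transfer this isolation into $\mcN_{\bar a}$. Given another tuple $(\bar a^*, \bar d^*)$ in $N_{\bar a}$ with the same atomic diagram as $(\bar a, \bar d)$, quantifier free isolation in $\mcM$ forces $tp^\mcM(\bar a^*, \bar d^*) = tp^\mcM(\bar a, \bar d)$, and saturation supplies $\alpha \in \Aut(\mcM)$ with $\alpha(\bar a, \bar d) = (\bar a^*, \bar d^*)$. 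The plan is to argue that $\alpha$ preserves $N_{\bar a}$ setwise, so that it restricts to an automorphism of $\mcN_{\bar a}$ sending $(\bar a, \bar d)$ to $(\bar a^*, \bar d^*)$ and thereby witnessing equality of their $\mcN_{\bar a}$-types.

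The main obstacle is exactly this closing step: establishing $\alpha(N_{\bar a}) = N_{\bar a}$, equivalently the symmetry statement that $N_{\bar a^*} = N_{\bar a}$ whenever $\bar a^*$ lies inside $N_{\bar a}$ and $tp^\mcM(\bar a^*) = tp^\mcM(\bar a)$. The second part alone only provides an isomorphism $\mcN_{\bar a} \cong \mcN_{\bar a^*}$, not equality of these substructures of $\mcM$. To upgrade to equality I will exploit the unary nature of the homogenizing formulas, which reduces quantifier free isolation of $tp^\mcM(c\bar a)$ to the determination of a single unary colour on $c$ from the $\mcM$-atomic diagram of $(c,\bar a)$. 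Combining this with iterated applications of the first part should allow quantifier free isolation to be transferred between the parameter tuples $\bar a$ and $\bar a^*$, identifying $X_{\bar a}$ with $X_{\bar a^*}$ and closing the argument.
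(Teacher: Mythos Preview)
Your arguments for the first two parts coincide with the paper's. For the third part the paper does something slightly different: it takes $\bar a\alpha$ (for some $\alpha\in X_{\bar a}$) rather than $\bar a$ as the proposed witness, and it only compares $\bar b\bar a\alpha$ against tuples of the shape $\bar c\bar a\alpha$ in which the $\bar a\alpha$-block is literally reused. The automorphism of $\mcM$ extending such an $f$ then fixes $\bar a$ pointwise, so it stabilises $N_{\bar a}$ setwise and restricts to an automorphism of $\mcN_{\bar a}$. That is exactly the device by which the paper circumvents the obstacle you (correctly) isolate.

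Your alternative route---proving $X_{\bar a^*}=X_{\bar a}$ whenever $\bar a^*$ lies in $N_{\bar a}$ with $tp^\mcM(\bar a^*)=tp^\mcM(\bar a)$---cannot be carried out, because the claim is false. Let $\mcM$ be the Rado graph together with a single extra isolated vertex; this is unary homogenizable via $\varphi(x)=\exists y\,E(x,y)$. For an adjacent pair $\bar a=(w,w')$ in the Rado part one checks that $X_{\bar a}$ is precisely the set of vertices adjacent to $w$ or to $w'$. Now pick any adjacent pair $\bar a^*=(u_1,u_2)$ inside $X_{\bar a}$: then $tp^\mcM(\bar a^*)=tp^\mcM(\bar a)$, but $X_{\bar a^*}$ is the set of vertices adjacent to $u_1$ or $u_2$, and by the Rado extension property this never coincides with $X_{\bar a}$. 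The unary reduction you sketch does not save the situation either: quantifier-free isolation of $tp^\mcM(c\bar a)$ requires the atomic diagram to determine the colour of \emph{every} coordinate of $\bar a$, not just of $c$, and that residual dependence on $\bar a$ is exactly what blocks the transfer from $\bar a$ to $\bar a^*$. In fact, in this example $\mcN_{\bar a}$ has age equal to all finite graphs yet possesses a dominating pair, so it is not the Rado graph, hence not model-complete, hence (by Proposition~\ref{uniprp}) not uniformly homogenizable at all---so no variant of your closing step can succeed here, and the third item as stated seems to require an extra hypothesis.
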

\begin{proof}
In order to prove the first statement we may assume without loss of generality that the tuple $\bar b\in X_{\bar a}$ consists of a single element $b$. If $c\in X_{\bar a}$ then, since $b\in X_{\bar a}$, Lemma \ref{unprp1} implies that $tp(cb\bar a)$ is isolated by a quantifier free formula, and hence $c \in X_{b\bar a}$.\\\indent
For the second part, assume that $\bar b\in M$ and $tp^\mcM(\bar a)= tp^\mcM(\bar b)$. This implies that there is an automorphism of $\mcM$ mapping $\bar a$ to $\bar b$. The restriction of this automorphism to $X_{\bar a}$ is then an isomorphism between $\mcN_{\bar a}$ and $\mcN_{\bar b}$.\\\indent
 For the third part first note that if $X_{\bar a}$ is finite, then the structure $\mcN_{\bar a}$ is uniformly homogenizable, by taking as uniform witness the whole structure, hence we assume $X_{\bar a}$ is infinite. Choose any $\alpha\in X_{\bar a}$, we will show that $\bar a \alpha$ is a witness for the uniform homogenization. Assume that for some $\bar b,\bar c\in X_{\bar a}$ there is an isomorphism $f:\mcN_{\bar a}\reduct\bar b\bar a\alpha \rightarrow \mcN_{\bar a}\reduct\bar c\bar a\alpha$. By Lemma \ref{unprp1} $tp^\mcM(\bar b\bar a\alpha)$ is isolated by a quantifier free formula thus $tp^\mcM(\bar b\bar a\alpha) = tp^\mcM(\bar c\bar a \alpha)$. However as $\mcM$ is saturated, this means that $f$ may be extended into an automorphism of $\mcM$. The restriction of this automorphism to $\mcN_{\bar a}$ implies that $tp^{\mcN_{\bar a}}(\bar b\bar a\alpha) = tp^{\mcN_{\bar a}}(\bar c\bar a \alpha)$.
\end{proof}

\begin{proof}[Proof of Theorem \ref{unatouni}]
Assume that the highest arity among relational symbols in $V$ equals to $\rho$ and let $\{\bar a_i\}_{i\in I}$ enumerate all $\rho-$tuples for some index set $I$. If there is a tuple $\bar a_i$ such that for each tuple $\bar b\in M$, $tp(\bar a_i\bar b)$ is isolated by a quantifier free formula, then $\mcM$ is uniformly homogenizable, and hence we are trivially done. Without loss of generality, we may thus assume that each tuple in $\{\bar a_i\}_{i\in I}$ does not have a type isolated by quantifier free formulas, since if $\bar a_i$ would be isolated by a quantifier free formula we can extend it to a tuple which is not hence all $\rho-$tuples are accounted for.
Since $\mcM$ is boundedly homogenizable, for each $\bar a_i$ let $\bar b_i$ be a tuple such that there is an element $c$ such that $tp(\bar a_i\bar b_ic)$ is isolated by a quantifier free formula. But the algebraic closure being degenerate implies that there is an infinite number of such elements $c$ hence $\mcN_{\bar a_i\bar b_i}$ is an infinite uniformly homogenizable structure by Lemma \ref{unlma2}. As $\mcM$ is $\omega-$categorical there are only a finite amount of different types of tuples $\bar a_i\bar b_i$. Hence by Lemma \ref{unlma2} there are only a finite number of isomorphism classes on $\{\mcN_{\bar a_i\bar b_i}\}_{i\in I}$. Since each $\rho-$tuple is contained in at least one of the structures we get $\mcM = \bigcup_{i\in I} \mcN_{\bar a_i\bar b_i}$.\end{proof}
The random bipartite graph is a non-unary homogenizable structure which is a union of uniformly homogenizable infinite structure. This follows as we may for each element $a$ let $\mcN(a)$ be the structure consisting of $a$ and all elements adjacent to $a$, thus no more edges than those to $a$ exist in $\mcN(a)$ and it is hence clear that $\mcN(a)$ is uniformly homogenizable (even unavoidably homogenizable by \cite{A}). The random bipartite graph is the union of all such structures $\mcN(a)$ for all elements $a$ and by the properties of the random bipartite graph all $\mcN(a)\cong\mcN(b)$ for all elements $a$ and $b$. \\\indent However this property does not hold for all boundedly homogenizable structures as we can see in examples such as \ref{treeexa}.

\end{document}